\newtheorem{thm}{Theorem}[section]
\newtheorem{lem}{Lemma}
\newtheorem{false statement}{False statement}
\newtheorem{fact}{Fact}
\theoremstyle{def}
\newtheorem{definition}{Definition}
\newtheorem{claim}{Claim}
\newtheorem{conj}{Conjecture}
\newcommand{\ex}{{\rm ex}}
\newcounter{mathitem}
  {\begin{list}{{$(\roman{mathitem})$}}{
   \setcounter{mathitem}{0}
   \usecounter{mathitem}
   \setlength{\topsep}{0pt plus 2pt minus 0pt}
   \setlength{\parskip}{0pt plus 2pt minus 0pt}
   \setlength{\partopsep}{0pt plus 2pt minus 0pt}
   \setlength{\parsep}{0pt plus 2pt minus 0pt}
   \setlength{\leftmargin}{35pt}
   \setlength{\itemsep}{0pt plus 2pt minus 0pt}}}
  {\end{list}}
\begin{document}

\title{\bf\Large Extensions of  the Erd\H{o}s-Gallai Theorem and Luo's Theorem with Applications}

\date{}

\author{Bo Ning\footnote{Center for Applied Mathematics, Tianjin University,
Tianjin, 300062 P.R. China. Email: bo.ning@tju.edu.cn. Supported by the NSFC grants (No.\ 11601379,
No.\ 11771141) and the Seed Foundation of Tianjin University (2018XRG-0025).}
~and  Xing Peng\footnote{Center for Applied Mathematics, Tianjin University,
Tianjin, 300062 P.R. China. Email: x2peng@tju.edu.cn. Supported by the NSFC grant
(No.\ 11601380) and the Seed Foundation of Tianjin University (2017XRX-0011)}\\[2mm]}
\maketitle

\begin{abstract}
The famous Erd\H{o}s-Gallai Theorem on the Tur\'an number of paths states that every graph
with $n$ vertices and $m$ edges contains a path with at least
$\frac{2m}{n}$ edges. In this note, we first establish a simple but novel
extension of the  Erd\H{o}s-Gallai Theorem  by proving that every graph
$G$ contains a path with at least $\frac{(s+1)N_{s+1}(G)}{N_{s}(G)}+s-1$ edges,
where $N_j(G)$ denotes the number of $j$-cliques in $G$ for $1\leq j\leq\omega(G)$.
We also construct a family of graphs which shows our extension improves the
estimate given by Erd\H{o}s-Gallai Theorem. Among applications, we show,
for example, that the main results of \cite{L17}, which are on the maximum
possible number of $s$-cliques in an $n$-vertex graph without a path with
$l$ vertices (and without cycles of length at least $c$), can be easily deduced
from this extension. Indeed, to prove these results, Luo \cite{L17}
generalized a classical theorem of Kopylov and established a tight
upper bound on the number of $s$-cliques in an $n$-vertex 2-connected
graph with circumference less than $c$. We prove a similar result for an
$n$-vertex 2-connected graph with circumference less than $c$ and large
minimum degree. We conclude this paper with an application of our results
to a problem from spectral extremal graph theory on consecutive lengths of
cycles in graphs.

\medskip
\noindent {\bf Keywords:} Clique; Cycle; Generalized Tur\'an number;
Erd\H{o}s-Gallai Theorem; Kopylov's theorem
\smallskip
\end{abstract}

\section{The Erd\H{o}s-Gallai Theorem and an extension}
Let $\mathcal{H}$ be a family of graphs.
The Tur\'an number  $\ex(n,\mathcal{H})$ is the largest possible number of
 edges in an $n$-vertex  graph $G$ which
contains  no member of $\cal H$  as a subgraph. If $\mathcal{H}=\{H\}$,
then we write $\ex(n,H)$ for  $\ex(n,\mathcal{H})$. We use $P_l$ to denote
a path with $l$ vertices. In this case, we say $P_l$ is of length $l-1$.

Erd\H{o}s and Gallai  \cite{EG59}
proved the following celebrated theorems
on Tur\'an numbers of cycles and paths.

\begin{thm}[Erd\H{o}s and Gallai \cite{EG59}]\label{Thm_EGcycle}
$\ex(n,\mathcal{C}_{\geq l}) \leq \frac{(l-1)(n-1)}{2}$, where $l\geq 3$ and  $\mathcal{C}_{\geq l}$
is the set of all cycles of length at least $l$.
\end{thm}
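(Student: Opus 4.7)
The plan is to prove the statement by induction on $n$. The base case $n\le l-1$ is immediate: any $n$-vertex graph has at most $\binom{n}{2}=\frac{n(n-1)}{2}\le\frac{(l-1)(n-1)}{2}$ edges, and a cycle of length at least $l$ cannot even exist.

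For the inductive step, assume $n\ge l$, and split on the minimum degree. If $\delta(G)\le (l-1)/2$, pick a vertex $v$ of minimum degree. Since $G-v$ is still $\mathcal{C}_{\ge l}$-free, the inductive hypothesis gives
\[
|E(G)|\le |E(G-v)|+\delta(G)\le \frac{(l-1)(n-2)}{2}+\frac{l-1}{2}=\frac{(l-1)(n-1)}{2},
\]
as required. So we may assume $\delta(G)\ge l/2$.

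In this remaining case, the strategy is to show that $G$ cannot be 2-connected, and then exploit its block structure. Concretely, if $G$ were 2-connected, the classical theorem of Dirac on the circumference of 2-connected graphs, namely $c(G)\ge \min\{n,2\delta(G)\}$, would force a cycle of length at least $\min\{n,l\}=l$, contradicting the hypothesis. Hence $G$ is disconnected or has a cut vertex. In the disconnected case, apply induction to each component and add the bounds. In the case of a cut vertex $v$, decompose $G=G_1\cup G_2$ with $V(G_1)\cap V(G_2)=\{v\}$ and $|V(G_i)|=n_i$, so $n_1+n_2=n+1$; each $G_i$ is $\mathcal{C}_{\ge l}$-free, and summing the two inductive bounds
\[
|E(G)|=|E(G_1)|+|E(G_2)|\le\frac{(l-1)(n_1-1)}{2}+\frac{(l-1)(n_2-1)}{2}=\frac{(l-1)(n-1)}{2}
\]
completes the step.

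The main obstacle is the 2-connected sub-case: ruling it out cleanly requires the Dirac circumference bound (or, failing that, a Pósa-rotation argument producing a cycle of length at least $\min\{n,2\delta\}$). Everything else is bookkeeping; the only minor care needed is with the parity of $l-1$ when writing $\delta(G)\le (l-1)/2$, but this does not affect the inductive arithmetic since $\delta(G)$ is an integer.
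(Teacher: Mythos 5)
Your proof is correct, but note that the paper itself does not prove Theorem \ref{Thm_EGcycle}: it is quoted from \cite{EG59} and used as a black box, so there is no internal argument to compare against. Your induction on $n$ is the standard modern route and all the bookkeeping checks out: the base case $n\le l-1$, the deletion of a vertex of degree at most $\lfloor (l-1)/2\rfloor$, the additivity of the bound over components and over the two sides of a cut vertex (where $n_1+n_2=n+1$ and both sides are proper, so the hypothesis applies), and the parity remark that the negation of $\delta\le(l-1)/2$ for an integer $\delta$ is $\delta\ge\lceil l/2\rceil\ge l/2$. The one substantive comment is that the entire difficulty of the theorem is concentrated in your appeal to Dirac's circumference bound $c(G)\ge\min\{n,2\delta(G)\}$ for $2$-connected graphs: that is a genuinely nontrivial theorem whose proof is a longest-path argument with endpoint degrees, i.e.\ precisely the kind of statement the paper records as Lemma \ref{Lem_Kopylov} (the Bondy--Kopylov lemma giving a cycle of length at least $\min\{m+1,d_P(x)+d_P(y)\}$). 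The original Erd\H{o}s--Gallai proof, and the Kopylov/Woodall refinements discussed in Section 3, run such a longest-path argument directly rather than citing Dirac; your version buys brevity at the cost of outsourcing the key step, which is legitimate as long as Dirac's theorem is available (its standard proofs do not rely on Erd\H{o}s--Gallai, so there is no circularity), and your suggested P\'osa-rotation fallback would make the argument self-contained.
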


\begin{thm}[Erd\H{o}s and Gallai \cite{EG59}]\label{Thm_EGpath}
$\ex(n,P_l) \leq \frac{(l-2)n}{2}$, where $l \geq 2$.
\end{thm}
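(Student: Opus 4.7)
My plan is to induct on $n$. The base case $n \leq l-1$ is trivial: every graph on $n$ vertices has $e(G) \leq \binom{n}{2} \leq \frac{(l-2)n}{2}$, regardless of any forbidden subgraph.

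For the inductive step, let $G$ be a $P_l$-free graph on $n \geq l$ vertices, and split into cases according to the minimum degree. If some vertex $v$ satisfies $\deg_G(v) \leq \frac{l-2}{2}$, then $G - v$ is also $P_l$-free, so by the inductive hypothesis $e(G-v) \leq \frac{(l-2)(n-1)}{2}$, and therefore
\[
e(G) = e(G-v) + \deg_G(v) \leq \frac{(l-2)(n-1)}{2} + \frac{l-2}{2} = \frac{(l-2)n}{2},
\]
as required. This handles the ``easy'' case cleanly.

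The substantive case is when $\delta(G) \geq \lceil (l-1)/2 \rceil$. Here I would argue component by component. Any connected component $C$ with $|V(C)| \leq l-1$ automatically satisfies $e(C) \leq \binom{|V(C)|}{2} \leq \frac{(l-2)|V(C)|}{2}$. For a component $C$ with $|V(C)| \geq l$, I would invoke the classical path-from-min-degree lemma: a connected graph with minimum degree at least $k$ and at least $2k+1$ vertices contains a path on $2k+1$ vertices. Taking $k = \lceil (l-1)/2 \rceil$ produces $P_l \subseteq C$, contradicting the $P_l$-freeness of $G$. Summing the per-component bounds completes the argument.

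The main obstacle is the auxiliary path-length lemma for connected graphs with prescribed minimum degree; I would prefer to cite it rather than reprove it. Its standard proof takes a longest path $v_1 v_2 \cdots v_t$ in the component, observes that all neighbors of $v_1$ and $v_t$ lie on the path, and uses a P\'osa-style rotation (or a direct cycle-plus-extension argument) to either lengthen the path or force $t \geq 2k+1$. This is the only step that is not pure bookkeeping; the rest of the proof is the induction and the trivial per-component estimate.
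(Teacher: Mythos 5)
The paper offers no proof of Theorem~\ref{Thm_EGpath} to compare against: it is quoted from Erd\H{o}s and Gallai \cite{EG59} and used as a black box. Judged on its own, your argument is the standard induction proof (delete a vertex of degree at most $(l-2)/2$, otherwise exploit large minimum degree component by component), and its overall structure is sound; the base case, the vertex-deletion step, and the bound $e(C)\le\binom{|V(C)|}{2}\le\frac{(l-2)|V(C)|}{2}$ for components of order at most $l-1$ are all fine.

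One step needs repair. In the minimum-degree case you take $k=\lceil (l-1)/2\rceil$ and invoke the lemma ``connected, $\delta\ge k$, at least $2k+1$ vertices $\Rightarrow$ a path on $2k+1$ vertices.'' When $l$ is even we have $2k+1=l+1$, so for a component $C$ with exactly $l$ vertices the hypothesis of the lemma as you state it fails, and you get no contradiction for precisely the components you must rule out. The fix is to quote the sharper standard form of the same lemma: a connected graph on $n$ vertices with minimum degree $\delta$ contains a path on $\min\{n,2\delta+1\}$ vertices --- your own sketch of the rotation argument already yields this dichotomy (either the longest path has at least $2k+1$ vertices, or it spans the component) --- so a component of order exactly $l$ with $\delta\ge l/2$ has a Hamiltonian path, i.e.\ a $P_l$; alternatively, for $|V(C)|=l\ge 3$ and $\delta\ge l/2$ one can simply cite Dirac's theorem. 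With the lemma in that form, every component of order at least $l$ contains $P_l$, and the rest of your proof goes through as written.
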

For the tightness of Theorem \ref{Thm_EGcycle}, one can check
the graph consisting of $\tfrac{n-1}{l-2}$ cliques of size $l-1$
with a common vertex, where $n-1$ is divisible by $l-2$.  The tightness
of Theorem \ref{Thm_EGpath} is shown by the graph
with  $\tfrac{n}{l-1}$ disjoint $K_{l-1}$, where
$n$ is divisible by $l-1$. For more improvements and extensions of Erd\H{o}s-Gallai's theorems,
see \cite{B71,W72,L75,FS75,W76,F90,BF91,CV91}. We refer the reader to an
excellent survey on related topics by F\"{u}redi and Simonovits \cite{FS13}.

For a graph $G$, let $\omega(G)$ be the {\it clique number} of $G$, i.e.,
the size of a largest clique in $G$. For $1 \leq j \leq \omega(G)$, we use $N_j(G)$ to denote the number
of copies of $K_j$ in $G$.  Recall  Theorem \ref{Thm_EGpath} can be rephrased as each graph contains a path
of length at least $\tfrac{2N_2}{N_1}$.
The main purpose of this note is to prove the following extension of Theorem \ref{Thm_EGpath}
and present several applications of this result. Since the proof of the following theorem is very short, we prove it right after we state it.
\begin{thm}\label{Thm_cliquepath}
Let $G$ be a graph. For each positive integer $s$ with $1\leq s\leq\omega(G)$, there is
a path of length at least $\frac{(s+1)N_{s+1}(G)}{N_{s}(G)}+s-1$ in $G$.
\end{thm}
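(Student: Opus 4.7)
\medskip
\noindent\textbf{Proof proposal.} The plan is to induct on $s$. The base case $s=1$ is precisely the path form of the Erd\H{o}s-Gallai theorem (Theorem~\ref{Thm_EGpath}), which gives a path of length at least $2N_2(G)/N_1(G)$.

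For the inductive step with $s \geq 2$, the main idea is to localize to the neighborhood of a well-chosen vertex. For each $v \in V(G)$, write $d_j(v)$ for the number of $j$-cliques of $G$ containing $v$, and let $G_v := G[N_G(v)]$. A $j$-clique of $G_v$ is the same as a $(j+1)$-clique of $G$ through $v$, so $N_{s-1}(G_v) = d_s(v)$ and $N_s(G_v) = d_{s+1}(v)$. Whenever $d_s(v) \geq 1$ the subgraph $G_v$ contains $K_{s-1}$, so the induction hypothesis applied to $G_v$ (with parameter $s-1$) produces a path in $G_v$ of length at least $\frac{s\,d_{s+1}(v)}{d_s(v)} + s - 2$. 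Prepending $v$, which is adjacent to every vertex of $G_v$, yields a path in $G$ of length at least $\frac{s\,d_{s+1}(v)}{d_s(v)} + s - 1$.

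To finish, I need to choose $v$ so that $d_{s+1}(v)/d_s(v)$ is sufficiently large. The double-counting identities $\sum_v d_s(v) = s\,N_s(G)$ and $\sum_v d_{s+1}(v) = (s+1)\,N_{s+1}(G)$, combined with the elementary fact that $\max_i(a_i/b_i) \geq (\sum_i a_i)/(\sum_i b_i)$ for positive $b_i$, furnish a vertex $v^{\ast}$ with $d_s(v^{\ast}) > 0$ satisfying
$$\frac{d_{s+1}(v^{\ast})}{d_s(v^{\ast})} \;\geq\; \frac{(s+1)\,N_{s+1}(G)}{s\,N_s(G)}.$$
Plugging this into the bound from the previous paragraph delivers a path in $G$ of length at least $\frac{(s+1)N_{s+1}(G)}{N_s(G)} + s - 1$, as desired.

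I do not anticipate a serious obstacle; the only delicate point is the bookkeeping on degenerate cases. Specifically, the condition $\omega(G_v) \geq s-1$ needed to invoke the induction hypothesis is equivalent to $d_s(v) \geq 1$, and vertices with $d_s(v) = 0$ automatically satisfy $d_{s+1}(v) = 0$ and so contribute zero to both of the sums above; thus they can be harmlessly omitted from the max. The hypothesis $s \leq \omega(G)$ ensures that at least one vertex has $d_s(v) > 0$, so the averaging has non-empty domain. The crispness of this structure is precisely what makes a very short proof possible.
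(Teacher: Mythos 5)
Your proposal is correct and follows essentially the same route as the paper: induction on $s$, applying the hypothesis inside vertex neighborhoods $G_v$, using the double-counting identities $\sum_v N_{j}(G_v)=(j+1)N_{j+1}(G)$, and prepending the chosen vertex to the path found in its neighborhood. The only difference is cosmetic bookkeeping—you select the vertex by maximizing the clique ratio $d_{s+1}(v)/d_s(v)$ before invoking the induction hypothesis, whereas the paper sums the inductive inequalities over all admissible vertices and then takes the vertex with the longest neighborhood path; the two averaging steps are equivalent.
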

\begin{proof}
We prove the theorem by induction on $s$. The case of $s=1$ is Theorem
\ref{Thm_EGpath}. Suppose it is true for $s=k-1$, where $s\leq\omega(G)-1$.
For each vertex $x\in V(G)$, let $G_x$ be the subgraph induced
by $N_G(x)$, and $l_x$ be the length of a longest path in $G_x$.
By induction hypothesis, for each vertex $x\in V(G)$
with $N_{k-1}(G_x)\neq 0$,
$ l_x \geq \frac{kN_{k}(G_x)}{N_{k-1}(G_x)}+k-2$.  Equivalently, $(l_x-k+2)N_{k-1}(G_x) \geq kN_k(G_x)$.
Let $l_{\max}=\max \{l_x: x\in V(G)\}$.  Then
\begin{equation} \label{eachx}
(l_{\max}-k+2)N_{k-1}(G_x) \geq kN_k(G_x)
\end{equation}
 holds for each $x$.
For $i \in \{k-1,k\}$,  let $V_{i}:=\{x\in V(G): N_{i}(G_x)\neq 0\}$.  Summing
inequality \eqref{eachx} over all $x \in V_{k-1}$,  we get
$$
(l_{\max}-k+2) \sum_{x \in V_{k-1}} N_{k-1}(G_x) \geq k \sum_{x \in V_{k-1}} N_k(G_x).
$$
Note that  $\sum_{x\in V_{k-1}}N_{k-1}(G_x)=kN_{k}(G)$ and
$\sum_{x\in V_{k}}N_{k}(G_x)=(k+1)N_{k+1}(G)$. It is easy to observe $V_{k}\subseteq V_{k-1}$.
By definition,  $N_{k}(G_y)=0$ for each $y\in V_{k-1}\backslash V_{k}$.
Thus $\sum_{x\in V_{k}}N_{k}(G_x)=\sum_{x\in  V_{k-1}}N_k(G_x)$.  We get
 $$
kN_k(G)(l_{\max}-k+2)  \geq k \sum_{x \in V_{k-1}} N_k(G_x)=k  \sum_{x \in V_{k}} N_k(G_x)=k(k+1) N_{k+1}(G).
$$
So $l_{\max} \geq \tfrac{(k+1)N_{k+1}(G)}{N_k(G)}+k-2$.
This implies that there exists a vertex $v$  such that $G_v$ contains a path $P_v$ of
length at least $\tfrac{(k+1)N_{k+1}(G)}{N_k(G)}+k-2$. Therefore,  there is a path of
length at least $\frac{(k+1)N_{k+1}(G)}{N_{k}(G)}+k-1$ in
$G$. The proof is complete.
\end{proof}
The following family of graphs shows our extension improves
the estimate given by Theorem \ref{Thm_EGpath}. Let $G$ be an $n$-vertex
graph which consists of a $K_{n-2}$ and  two pendant edges sharing an endpoint from the $K_{n-2}$.
Theorem \ref{Thm_EGpath} implies that $G$ contains a path of length at least
$\frac{2N_2(G_1)}{N_1(G_1)}=n-5+\frac{10}{n}$; while Theorem \ref{Thm_cliquepath}
tells us that $G$ contains a path of length at least
$\frac{(n-2)N_{n-2}(G)}{N_{n-3}(G)}+n-4=n-3$,
where we choose $s=n-3$.

For two graphs $G$ and $H$, we write $G\vee H$ for their join  which satisfies
$V(G\vee  H)=V(G)\cup V(H)$ and $E(G\vee H)=E(G) \cup E(H) \cup \{xy:x\in V(G), y\in V(H)\}$.
The proof of Theorem \ref{Thm_cliquepath} implicitly implies the following result.
\begin{thm}\label{Thm_Generkipas}
Let $\omega(G)\geq k\geq 2$ be an integer. If  $G$ is a graph with $N_k(G)\neq 0$,
then $G$ contains  a subgraph $P_l\vee K_1$, where
$l\geq\tfrac{(k+1)N_{k+1}(G)}{N_{k}(G)}+k-1$. In particular, $G$ contains
cycles of lengths from $3$ to
$\left\lceil\tfrac{(k+1)N_{k+1}(G)}{N_{k}(G)}\right\rceil+k$.
\end{thm}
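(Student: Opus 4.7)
The plan is to derive this theorem essentially for free from the proof of Theorem \ref{Thm_cliquepath}, which has already done all the real work; the task is to extract the correct structural information and to handle the off-by-one conventions carefully.

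First, I will re-read the last line of the inductive step in the proof of Theorem \ref{Thm_cliquepath}: it produces a vertex $v \in V(G)$ such that $G_v = G[N_G(v)]$ contains a path $P_v$ of length (in edges) at least $\tfrac{(k+1)N_{k+1}(G)}{N_k(G)} + k - 2$. Writing this path as $v_1 v_2 \cdots v_l$, the number $l$ of vertices on the path satisfies $l \geq \tfrac{(k+1)N_{k+1}(G)}{N_k(G)} + k - 1$. By the very definition of $G_v$, each $v_i$ is adjacent to $v$, so $v$ together with $V(P_v)$ spans a copy of $P_l \vee K_1$ with the desired $l$. This is exactly the first assertion.

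For the "in particular" clause I will use this fan to list cycles explicitly. For every integer $t$ with $2 \leq t \leq l$, the sequence $v\, v_1\, v_2 \cdots v_t\, v$ is a cycle in $G$ of length $t+1$, because consecutive $v_i$'s are adjacent along $P_l$ and $v$ is joined to every $v_i$. Letting $t$ run from $2$ up to $l$ realizes every integer cycle length in $\{3, 4, \ldots, l+1\}$. Since $l$ is an integer, $l \geq \left\lceil \tfrac{(k+1)N_{k+1}(G)}{N_k(G)} \right\rceil + k - 1$, so $l+1 \geq \left\lceil \tfrac{(k+1)N_{k+1}(G)}{N_k(G)} \right\rceil + k$, which is the stated upper end of the guaranteed cycle lengths.

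There is no real obstacle in this argument; the entire content has been packaged into the proof of Theorem \ref{Thm_cliquepath}, and the fan graph $P_l \vee K_1$ is trivially pancyclic from length $3$ up to $l+1$. The only thing to watch is the two independent off-by-one shifts, namely path-length-in-edges versus number of vertices on the path, and number of vertices of the fan versus cycle length; both have been tracked above.
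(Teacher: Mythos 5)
Your proposal is correct and is exactly the paper's intended argument: the paper gives no separate proof of this theorem, remarking only that it is implicit in the proof of Theorem \ref{Thm_cliquepath}, and you have made that extraction explicit (the vertex $v$ with a long path in $G_v=G[N_G(v)]$ yields the fan $P_l\vee K_1$, which is pancyclic through length $l+1$), with the edge/vertex and ceiling off-by-ones handled correctly.
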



\section{Short proofs of two theorems of Luo}
Before we present applications of Theorems  \ref{Thm_cliquepath} and \ref{Thm_Generkipas}  to the generalized Tur\'an number, we recall a few definitions.
Let $T$ be a graph and $\mathcal{H}$ be a family of graphs. The generalized Tur\'an
number $\ex(n,T,\mathcal{H})$ is  the maximum possible number of copies of $T$
in an $n$-vertex graph which is $H$-free for each $H\in \mathcal{H}$.
When $\mathcal{H}=\{H\}$, we
write $\ex(n,T,H)$ instead of $\ex(n,T,\{H\})$. If $T=K_2$,  then
$\ex(n,K_2,H)=\ex(n,H)$ is the classical Tur\'an number of $H$.

The generalized Tur\'an
number  has received a lot of attention recently. There are several
notable and nice papers concerning the generalized Tur\'{a}n
number $\ex(n,T,H)$ (see \cite{E62-2,BG08,G12,FS13,AS16,L17,EGMS-Arxiv}).
Erd\H{o}s \cite{E62-2} first determined $\ex(n,K_t,K_r)$ for all $t<r$.
Bollob\'{a}s and Gy\H{o}ri \cite{BG08} determined the order of magnitude
of $\ex(n,C_3,C_5)$.  Their estimate was improved by Alon and Shikhelman \cite{AS16}
and recently by Ergemlidze et al. \cite{EGMS-Arxiv}. Alon and Shikhelman
obtained a number of results on $\ex(n,T,H)$ for different $T$ and $H$
and posed several open problems in \cite{AS16}.

Luo \cite{L17} recently proved  upper bounds for
$\ex(n,K_s,\mathcal{C}_{\geq l})$ and $\ex(n,K_s,P_l)$  which are
generalizations of Theorem \ref{Thm_EGcycle} and Theorem \ref{Thm_EGpath}.

\begin{thm}[Luo \cite{L17}] \label{luo1}
$\ex(n,K_s,\mathcal{C}_{\geq l})  \leq \tfrac{n-1}{l-2}\binom{l-1}{s}$, where $l\geq 3$ and $s\geq 2$.
\end{thm}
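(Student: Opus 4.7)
My plan is to prove Theorem~\ref{luo1} by a short induction on $s$, with Theorem~\ref{Thm_EGcycle} supplying the base case and Theorem~\ref{Thm_Generkipas} driving the inductive step.

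For the base case $s=2$, the claimed inequality reduces to $N_2(G)=|E(G)|\leq \frac{(l-1)(n-1)}{2}$, which is exactly Theorem~\ref{Thm_EGcycle}, after observing that $\frac{n-1}{l-2}\binom{l-1}{2}=\frac{(n-1)(l-1)}{2}$.

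For the inductive step, suppose the bound has been verified at level $s\geq 2$, and let $G$ be an $n$-vertex graph containing no cycle of length at least $l$. I may assume $N_s(G)\neq 0$, since otherwise $N_{s+1}(G)=0$ and the bound is trivial. Theorem~\ref{Thm_Generkipas} applied with $k=s$ then produces cycles in $G$ of every length from $3$ up to $\lceil (s+1)N_{s+1}(G)/N_s(G)\rceil+s$, and this must be at most $l-1$. Dropping the ceiling yields the key inequality
\[
(s+1)\,N_{s+1}(G)\leq (l-s-1)\,N_s(G).
\]
Combining this with the inductive hypothesis $N_s(G)\leq \frac{n-1}{l-2}\binom{l-1}{s}$ and the identity $\binom{l-1}{s+1}=\frac{l-s-1}{s+1}\binom{l-1}{s}$ immediately gives $N_{s+1}(G)\leq \frac{n-1}{l-2}\binom{l-1}{s+1}$, closing the induction.

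The plan should produce a very clean proof: essentially all of the nontrivial content is already packaged in Theorems~\ref{Thm_EGcycle} and~\ref{Thm_Generkipas}, and the inductive step reduces to a one-line algebraic identity. I do not foresee any real obstacle; the only technicality is verifying that the ceiling in the cycle-length bound can be discarded, which is immediate. Notably this plan bypasses the block decomposition and the 2-connected Kopylov-type bound appearing in Luo's original argument, reducing the whole statement to the cycle information contained in Theorem~\ref{Thm_Generkipas}.
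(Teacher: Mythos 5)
Your proof is correct and takes essentially the same route as the paper: the paper likewise combines Theorem~\ref{Thm_Generkipas} with the absence of cycles of length at least $l$ to obtain $N_k(G)\le\tfrac{l-k}{k}N_{k-1}(G)$ and then applies this recursively down to the Erd\H{o}s--Gallai bound $N_2(G)\le\tfrac{(n-1)(l-1)}{2}$ from Theorem~\ref{Thm_EGcycle}. Your induction on $s$ is simply that recursion stated formally, so there is no substantive difference.
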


\begin{thm}[Luo \cite{L17}]\label{luo2}
$\ex(n,K_s,P_l) \leq \frac{n}{l-1}\binom{l-1}{s}$, where $l\geq 2$ and $s\geq 2$.
\end{thm}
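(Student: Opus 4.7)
The plan is to derive Luo's bound directly from Theorem \ref{Thm_cliquepath} by induction on $s$. The base case $s=1$ is immediate because both sides of the desired inequality equal $n$; the case $s=2$ follows at once from Theorem \ref{Thm_EGpath} after the identity $\binom{l-1}{2}/(l-1) = (l-2)/2$, and Theorem \ref{Thm_EGpath} is itself the $s=1$ instance of Theorem \ref{Thm_cliquepath}.

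For the inductive step, let $G$ be an $n$-vertex $P_l$-free graph and suppose the bound holds at level $s-1$. The inequality is trivial when $N_s(G)=0$, so I may assume $\omega(G)\geq s$, which legitimizes invoking Theorem \ref{Thm_cliquepath} with parameter $s-1$: there is a path in $G$ of length at least $\frac{s\,N_s(G)}{N_{s-1}(G)} + s - 2$. But $G$ is $P_l$-free, so every path in $G$ has length at most $l-2$. Combining the two inequalities and rearranging produces the one-step estimate
\[
N_s(G) \leq \frac{l-s}{s}\,N_{s-1}(G).
\]

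Plugging in the inductive hypothesis $N_{s-1}(G) \leq \frac{n}{l-1}\binom{l-1}{s-1}$ and using the routine identity $\frac{l-s}{s}\binom{l-1}{s-1} = \binom{l-1}{s}$ gives exactly $N_s(G) \leq \frac{n}{l-1}\binom{l-1}{s}$, closing the induction. I do not anticipate any real obstacle: the argument is a one-line reformulation of Theorem \ref{Thm_cliquepath} in the $P_l$-free setting, glued together by a standard binomial manipulation. The only delicate point is verifying that Theorem \ref{Thm_cliquepath} can be applied at parameter $s-1$, which is handled by the reduction to the case $N_s(G)>0$.
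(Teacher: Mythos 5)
Your proposal is correct and is essentially the paper's own argument: the key step, deducing $N_s(G)\leq \frac{l-s}{s}N_{s-1}(G)$ from Theorem \ref{Thm_cliquepath} applied with parameter $s-1$ together with the $P_l$-free bound $l-2$ on the longest path, is exactly what the paper does, the only difference being that you package the iteration of this inequality down to the Erd\H{o}s--Gallai base $N_2(G)\leq \frac{(l-2)n}{2}$ as an induction on $s$ (and you handle the trivial case $N_s(G)=0$ explicitly, which the paper glosses over).
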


Luo's result turned out to be useful for investigating Tur\'an-type problems
in hypergraphs. For example, Gy\H{o}ri, Methuku, Salia, Tompkins, and
Vizer \cite{GMSTV18} applied Theorem \ref{luo1} to study the maximum number
of hyperedges in a connected $r$-uniform  $n$-vertex hypergraph without a
Berge path of length $k$.

We next give  very shorts proofs of Theorems \ref{luo1}
and \ref{luo2} by applying Theorems \ref{Thm_Generkipas} and \ref{Thm_cliquepath}
 respectively.
\smallskip
\noindent
{\bf A short proof of Theorem \ref{luo1}.}
Let $c$ be the length of a longest cycle in $G$.
By Theorem \ref{Thm_Generkipas} and the condition in Theorem \ref{luo1}, we have
$ \frac{kN_{k}(G)}{N_{k-1}(G)}+k-1 \leq c \leq l-1  $. This implies  $N_k(G)\leq  \frac{l-k}{k}N_{k-1}(G)$ holds for $3\leq k\leq s$.
We apply the inequality recursively and get
$$N_s(G)\leq \frac{(l-s)(l-s+1)\cdots(l-3)}{s(s-1)\cdots3}N_2(G).$$
By Theorem \ref{Thm_EGcycle}, we have $N_2(G)\leq \frac{(n-1)(l-1)}{2}$, and thus
$N_s(G)\leq \frac{n-1}{l-2}\binom{l-1}{s}$.
This completes the proof. {\hfill$\Box$}
\smallskip
\noindent
{\bf A short proof of Theorem \ref{luo2}.}
Since $G$ is $P_l$-free,  the length of a longest path  $P$  in $G$ is at most $ l-2$.
By Theorem \ref{Thm_cliquepath}, we have $l-2\geq \frac{kN_k(G)}{N_{k-1}(G)}+k-2$ whenever $2\leq k\leq s$.
It follows $N_k(G)\leq  \tfrac{l-k}{k}N_{k-1}(G)$ for $2 \leq k \leq s.$   Recursively applying this inequality, we get
$$N_s(G) \leq \frac{(l-s)(l-s+1)\cdots(l-3)}{s(s-1)\cdots 3}N_2(G).$$
Theorem \ref{Thm_EGpath} gives $N_2(G) \leq \frac{(l-2)n}{2}$ and so $N_s(G) \leq \frac{n}{l-1} \binom{l-1}{s}$.
This completes the proof. {\hfill$\Box$}

\section{An extension of Luo's theorem}

In order to prove Theorems \ref{luo1} and \ref{luo2},
Luo \cite{L17} extended some classical theorems due to Kopylov \cite{K77}.
Let $H(n,k,c)$ be a graph obtained from $K_{c-k}$ by connecting each
vertex of a set of $n-(c-k)$ isolated vertices to the same $k$ vertices
choosing from $K_{c-k}$. Let $f_s(n,k,c)$ be the number of $K_s$ in
$H(n,k,c)$. Namely, $f_s(n,k,c)=\binom{c-k}{s}+\binom{k}{s-1}\left(n-(c-k) \right)$.
When $s=2$, it equals the number of edges in $H(n,k,c)$. The circumference of a graph $G$ is the length of a longest cycle in $G$.
Improving Theorem \ref{Thm_EGcycle}, Kopylov \cite{K77} proved
the following.

\begin{thm}[Kopylov \cite{K77}]\label{Thm_Kopy}
Let $n\geq c\geq 5$ and $G$ be a 2-connected graph on $n$ vertices
with circumference less than $c$. Then
$N_2(G)\leq \max\{f_2(n,2,c),f_2(n,\lfloor\frac{c-1}{2}\rfloor,c)\}$.
\end{thm}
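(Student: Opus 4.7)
The plan is to adapt Kopylov's original disintegration argument. First, I would reduce to the case where $G$ is \emph{edge-maximal} among $2$-connected $n$-vertex graphs with circumference at most $c-1$: adding any missing edge preserves the upper bound being proved, so we may assume that inserting any non-edge of $G$ would either destroy $2$-connectivity or create a cycle of length at least $c$.

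Next, I would set $t:=\lfloor(c-1)/2\rfloor$ and perform a disintegration: iteratively remove a vertex of degree at most $t$ as long as one exists. This yields a nested chain $G=G_0\supset G_1\supset\cdots\supset G_r=H$ where $H$ is either empty or has $\delta(H)\geq t+1$. Each deletion removes at most $t$ edges, hence
\[
|E(G)|\leq |E(H)|+t\bigl(n-|V(H)|\bigr).
\]

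The structural core of the proof is the nonempty case. Since $\delta(H)\geq t+1$ and $H$ inherits a suitable connectivity property from $G$, the classical Dirac-type bound (a $2$-connected graph has circumference at least $\min(|V|,2\delta)$) would force $|V(H)|\leq c-1$, for otherwise $H$ alone would contain a cycle of length at least $c$. Combined with $2(t+1)\geq c$, this implies $H$ has a Hamiltonian cycle of length $h:=|V(H)|\leq c-1$. Edge-maximality then forces $H$ to be a clique: any non-edge $uv$ inside $H$, together with a Hamiltonian path of $H-\{u,v\}$ and the rigid attachment pattern of the trimmed vertices, could be used to build a cycle of length at least $c$. One then shows that $h=c-t$ and that in the extremal configuration every trimmed vertex attaches to a common $t$-subset of $V(H)$, producing exactly $H(n,t,c)$ with $f_2(n,t,c)$ edges.

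The bound $f_2(n,2,c)$ arises as a competing extremum when the disintegration is forced to use only the parameter $k=2$. Since $f_2(n,k,c)$ is convex in $k$ on $[2,\lfloor(c-1)/2\rfloor]$, its maximum on the relevant interval is attained at one of the two endpoints $k=2$ and $k=\lfloor(c-1)/2\rfloor$, yielding the stated bound. I expect the main obstacle to be propagating $2$-connectivity (or a suitable surrogate) through the disintegration and using edge-maximality to pin down the clique structure of $H$; controlling how trimmed vertices can reroute cycles through the kernel is the most delicate step.
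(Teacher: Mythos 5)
Your overall frame (an edge-maximal counterexample plus iterated deletion of vertices of degree at most $t=\lfloor (c-1)/2\rfloor$) is indeed Kopylov's scheme, the same one this paper follows in its sketch of Theorem \ref{Thm_KWLcycle}. But the structural core of your argument has a genuine gap: you apply a Dirac-type circumference bound to the kernel $H$, which requires $H$ to be $2$-connected, and the disintegration does not preserve $2$-connectivity. You flag ``propagating $2$-connectivity'' as the main obstacle, but no surrogate is supplied, and the proof cannot be completed along that line. The actual argument needs no connectivity of $H$ at all: edge-maximality of $G$ gives, for every nonadjacent pair, a path of length at least $c-1$ in $G$ itself, and then Lemma \ref{Lem_Kopylov} (a $2$-connected graph with an $x$--$y$ path of $m$ edges contains a cycle of length at least $\min\{m+1,d_P(x)+d_P(y)\}$), together with the fact that both endpoints of a nonedge of $H$ have more than $t$ neighbours in $H$ lying on that path and $2(t+1)\geq c$, forces a cycle of length at least $c$ whenever $H$ has a nonedge. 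Without this lemma your step ``edge-maximality then forces $H$ to be a clique'' is unsupported: Hamiltonicity of $H$ plus a nonedge does not by itself produce a cycle of length at least $c$ in $G$.

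Second, your route to the stated two-term maximum is not correct even granting that $H$ is a clique. It is not true that $|V(H)|=c-t$, nor that the extremal configuration must be $H(n,t,c)$: the clique kernel can have any order $r$ with $t+2\leq r\leq c-2$, and the bound $f_2(n,2,c)$ does not ``arise when the disintegration is forced to use $k=2$''. In Kopylov's proof one first shows $2\leq c-r\leq t$ (again via Lemma \ref{Lem_Kopylov}, choosing a nonadjacent pair $x\notin V(H)$, $y\in V(H)$ whose longest connecting path $P$ is longest among such pairs, and showing $N_G(x)\subseteq V(P)$ and $N_H(y)\subseteq V(P)$, so that $d_P(x)+d_P(y)\geq 2+(r-1)\geq c$ if $r\geq c-1$); then one performs a second disintegration at level $c-r$. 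If its kernel equals $H$, counting gives $N_2(G)\leq (n-r)(c-r)+\binom{r}{2}=f_2(n,c-r,c)\leq\max\{f_2(n,2,c),f_2(n,t,c)\}$ by convexity of $f_2(n,x,c)$ on $[2,t]$; if it strictly contains $H$, one constructs a cycle of length at least $c$, which is the final contradiction. Your sketch contains neither the second disintegration nor any mechanism for that last case, so as written it does not yield the maximum of $f_2(n,2,c)$ and $f_2(n,\lfloor\frac{c-1}{2}\rfloor,c)$ claimed in the statement.
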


Kopylov's theorem was reproved by Fan, Lv and Wang in \cite{FLW04} who indeed proved
a slightly stronger result with the aid of another result
of Woodall \cite{W76}. In the same paper \cite{W76}, Woodall posed a  conjecture
which is  a generalization of a previous result on nonhamiltonian graphs
due to Erd\H{o}s \cite{E62}.

\begin{conj}[Woodall \cite{W76}]\footnote{It should be mentioned that, in the last part of the paper of Kopylov,
he wrote a sentence as follows: ``we remark that a proof of Woodall's conjecture
can be obtained by a minor modification of the solution to
Problem D." (quoted from \cite{K77}).}
Let $n\geq c\geq 5$. If $G$ is a 2-connected graph on $n$ vertices
with circumference less than $c$ and minimum degree $\delta(G)\geq k$, then
$N_2(G)\leq \max\{f_2(n,k,c),f_2(n,\lfloor\frac{c-1}{2}\rfloor,c)\}$.
\end{conj}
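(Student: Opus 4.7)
The plan is to adapt Kopylov's disintegration argument from \cite{K77}, inserting the minimum-degree hypothesis $\delta(G)\geq k$ into the final edge count. Let $\alpha:=\lfloor(c-1)/2\rfloor$; I first reduce to the case $k\leq\alpha$. If $k>\alpha$, then Dirac's classical theorem on circumference in 2-connected graphs yields a cycle of length at least $\min(n,2k)\geq c$ in $G$, contradicting the circumference hypothesis (the boundary case $n<2k$ is verified directly, since $G$ is then essentially complete). Under $k\leq\alpha$, the conjectured bound $\max\{f_2(n,k,c),f_2(n,\alpha,c)\}$ is the maximum of the convex function $j\mapsto f_2(n,j,c)$ over its two relevant endpoints $j=k$ and $j=\alpha$, so one must identify which of the two extremes is attained.

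I would then iteratively delete vertices of current degree at most $\alpha$, ordering the deleted vertices as $v_1,\dots,v_r$ with removal-time degrees $d_1,\dots,d_r\leq\alpha$, terminating at a (possibly empty) subgraph $H$ with $\delta(H)\geq\alpha+1$. Applying the classical Bondy-style bound to a 2-connected block of $H$ produces a cycle of length at least $\min(|V(H)|,2\alpha+2)\geq\min(|V(H)|,c)$ in $H$; since $c(G)<c$, this forces $|V(H)|\leq c-1$. The total edge count decomposes as
$$|E(G)|\;\leq\;\binom{|V(H)|}{2}\;+\;\sum_{i=1}^{r}d_i,$$
and the task reduces to bounding this sum.

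The key additional ingredient, and the main obstacle, is exploiting $\delta(G)\geq k$ to refine the trivial estimate $d_i\leq\alpha$. The crucial observation is that the \emph{earliest}-removed vertices must satisfy $d_i\geq k$: each vertex starts in $G$ with degree at least $k$ and can be deleted only after its current degree has dropped to at most $\alpha$, which requires that sufficiently many of its neighbours have already been removed. A careful shifting argument --- transferring degree-mass from early-removed vertices (where $d_i$ is close to $k$) to late-removed vertices (where $d_i$ can reach $\alpha$), subject to the combinatorial constraints imposed by the removal order --- should show that $\binom{|V(H)|}{2}+\sum d_i$ never exceeds $\max\{f_2(n,k,c),f_2(n,\alpha,c)\}$. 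The two extremal configurations are $|V(H)|=c-k$ with all $d_i=k$ (yielding $H(n,k,c)$) and $|V(H)|=c-\alpha$ with all $d_i=\alpha$ (yielding $H(n,\alpha,c)$); matching both simultaneously via the disintegration bookkeeping is precisely the ``minor modification'' that Kopylov alluded to in his closing remark in \cite{K77}, and I expect the majority of the technical work to lie in making this shifting argument rigorous and tight at both extremal graphs.
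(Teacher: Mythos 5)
Your proposal has a genuine gap, and it sits exactly where you placed the phrase ``a careful shifting argument should show.'' The bookkeeping you set up cannot be closed as described. First, the observation that the earliest-deleted vertices satisfy $d_i\geq k$ is a \emph{lower} bound on their removal-time degrees, so it cannot improve the upper estimate on $\sum_i d_i$; the hypothesis $\delta(G)\geq k$ simply does not enter a disintegration count this way. Second, the structural control you extract is far too weak: bounding only $|V(H)|\leq c-1$ and $d_i\leq\alpha$ gives $|E(G)|\leq\binom{|V(H)|}{2}+\alpha(n-|V(H)|)$, and when $|V(H)|$ is close to $c$ this is roughly $\binom{c-1}{2}+\alpha(n-c+1)$, which exceeds both $f_2(n,k,c)$ and $f_2(n,\alpha,c)$ (whose clique parts are only $\binom{c-k}{2}$ and $\binom{c-\alpha}{2}$). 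What is needed is the trade-off between the size $r$ of the surviving core and the per-vertex contribution of the deleted vertices: one must show $r\leq c-k$ and, in effect, that deleted vertices contribute at most $c-r$ each, after which convexity of $x\mapsto f_2(n,x,c)$ on $[k,\alpha]$ finishes the count. No amount of ``mass shifting'' among degrees bounded only by $\alpha$ recovers this.

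The way the minimum-degree hypothesis actually enters (and the way the paper's Theorem \ref{Thm_KWLcycle}, whose $s=2$ case is this conjecture, is proved) is structural, not numerical: pass to an edge-maximal counterexample, so that every pair of nonadjacent vertices is joined by a path with at least $c-1$ edges; show the $t$-core $H=H(G;t)$ with $t=\lfloor(c-1)/2\rfloor$ is a nonempty \emph{clique}; then choose a nonadjacent pair $x\notin V(H)$, $y\in V(H)$ maximizing the length of a longest $x$--$y$ path, show $N_G(x)\subseteq V(P)$ and $N_H(y)\subseteq V(P)$, and apply Kopylov's lemma (Lemma \ref{Lem_Kopylov}) with $d_P(x)\geq k$ and $d_P(y)\geq r-1$ to force $r\leq c-k$; finally compare $H$ with the second disintegration $H(G;c-r)$, using convexity of $f_2(n,x,c)$ if they coincide and producing a cycle of length at least $c$ if they differ. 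Your proposal never passes to an edge-maximal counterexample (without which the core need not be a clique), never uses a path-plus-endpoint-degrees lemma, and never performs the second disintegration, so the two extremal configurations you name are not actually matched by your count. The reduction to $k\leq\alpha$ via Dirac's theorem at the start is fine, but the core of the argument is missing.
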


One can easily find that Kopylov's theorem confirmed Woodall's conjecture for $k=2$.

Generalizing  Kopylov's result, Luo \cite{L17}
proved the following theorem.
\begin{thm}[Luo \cite{L17}] \label{Thm_Lclipat}
Let $n\geq c\geq 5$ and $s\geq 2$. If  $G$ is a 2-connected graph on $n$ vertices
with circumference less than $c$, then
$N_s(G)\leq \max\{f_s(n,2,c), f_s(n,\lfloor\frac{c-1}{2}\rfloor,c)\}$.
\end{thm}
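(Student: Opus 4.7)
My plan is to follow Kopylov's $\alpha$-disintegration framework that underlies Theorem \ref{Thm_Kopy}, adapting it from edge counting to $K_s$ counting. Set $t := \lfloor (c-1)/2 \rfloor$. The first step is the \emph{$t$-disintegration} of $G$: iteratively remove any vertex whose current degree is at most $t$, recording the deletion order $v_1, v_2, \ldots, v_r$, the degree $d_i \le t$ of $v_i$ in the graph just before its deletion, and the residual subgraph $H$ on the remaining vertices, which satisfies $\delta(H) \ge t+1$ unless $H$ is empty or trivial.

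The second step is to invoke Kopylov's structural description of the residue. Because $G$ is $2$-connected with circumference less than $c$, the graph $H$ must be small and highly restricted; the standard dichotomy is that either (a) $H$ is a clique of order $h \le c-t$, which will correspond to the extremal graph $H(n,t,c)$, or (b) $|V(H)|$ is even smaller and a refined analysis forces the deleted vertices to behave as if they had effective degree only $2$, corresponding to $H(n,2,c)$. This structural lemma is independent of $s$ and forms the technical heart of Kopylov's original proof; it transfers verbatim to the present setting since 2-connectivity and the circumference hypothesis are not affected by the reinterpretation from edges to cliques.

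The third step is to translate this structural information into a clique bound via a standard charging. Charge each $K_s$ of $G$ to its earliest vertex in the deletion order, with vertices of $H$ treated as ``last''. Each $v_i$ is charged at most $\binom{d_i}{s-1}$ times, since the remaining $s-1$ vertices of such a $K_s$ must lie among the $d_i$ neighbors of $v_i$ at the moment of deletion; similarly $H$ contributes at most $\binom{|V(H)|}{s}$ copies. Thus
\[
N_s(G) \le \binom{|V(H)|}{s} + \sum_{i=1}^{r}\binom{d_i}{s-1}.
\]
Writing $h := |V(H)|$ and using $d_i \le t$ together with the constraint on $h$ coming from the dichotomy, the right-hand side is bounded by $\binom{h}{s} + (n-h)\binom{t}{s-1}$, a function convex in $h$ on the relevant range; its maximum is therefore attained at an endpoint, and the two endpoints match the extremal configurations $H(n,t,c)$ and $H(n,2,c)$, giving $f_s(n,t,c)$ and $f_s(n,2,c)$ respectively.

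The principal obstacle, exactly as in Kopylov's argument for $s=2$, will be the structural dichotomy in step two: controlling the size and shape of the residue $H$ under the circumference and $2$-connectivity hypotheses, in particular showing that a nontrivial residue must be a clique whose order is bounded by $c-t$ (otherwise one could splice a Hamilton cycle of $H$ together with a path through a deleted vertex to exceed the circumference bound). Once that dichotomy is in hand, the charging is a routine generalization of edge counting to $K_s$ counting, and the final optimization reduces to evaluating a convex function at two endpoints; some care will be needed in case (b) to verify that the refined bound indeed matches $f_s(n,2,c)$ rather than a larger intermediate value.
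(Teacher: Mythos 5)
You are following the right general framework (Kopylov's disintegration, which is exactly how the paper proves its Theorem \ref{Thm_KWLcycle}, whose case $k=2$ is this statement), but two essential ingredients are missing and the final optimization, as written, does not give the stated bound. First, the structural facts you want to import require passing to an \emph{edge-maximal} counterexample: one assumes $G$ is a counterexample such that adding any nonedge creates a cycle of length at least $c$, so that every nonadjacent pair is joined by a path with at least $c-1$ edges. Without this, the residue $H=H(G;t)$ need not be a clique at all. Moreover your dichotomy is mis-stated: since $\delta(H)\ge t+1$, a nonempty residue clique has order $r\ge t+2$ (so $r$ is at least roughly $c-t$, not at most $c-t$), and the substantive claim --- proved by choosing a nonadjacent pair $x\notin V(H)$, $y\in V(H)$ joined by a longest path and applying Lemma \ref{Lem_Kopylov} --- is the \emph{upper} bound $r\le c-2$.

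Second, and more seriously, your charging-plus-convexity step does not close the proof. If every deleted vertex is only charged $\binom{t}{s-1}$, the resulting bound $\binom{h}{s}+(n-h)\binom{t}{s-1}$ at the upper end of the admissible range $h\approx c-2$ exceeds $\max\{f_s(n,2,c),f_s(n,t,c)\}$: for example with $s=2$, $c=11$, $t=5$ and $n$ large it equals $5n-9$, while $f_2(n,5,11)=5n-15$ and $f_2(n,2,11)=2n+18$. The point of Kopylov's (and Luo's) argument is that once $H$ is known to be a clique of order $r$ with $2\le c-r\le t$, one performs a \emph{second} disintegration at the lower level $c-r$, i.e. considers $H'=H(G;c-r)$. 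If $H'=H$, every deleted vertex is charged only $\binom{c-r}{s-1}$, giving $N_s(G)\le (n-r)\binom{c-r}{s-1}+\binom{r}{s}=f_s(n,c-r,c)$, and only now does convexity of $f_s(n,x,c)$ in $x\in[2,t]$ bound this by the maximum of the two endpoint values $f_s(n,2,c)$ and $f_s(n,t,c)$. If $H'\ne H$, one must explicitly exhibit a cycle of length at least $c$ (again via long paths and Lemma \ref{Lem_Kopylov}), a contradiction; this case is not a matter of deleted vertices having ``effective degree $2$''. Without the edge-maximality, the bound $r\le c-2$, the second disintegration, and the cycle argument for $H'\ne H$, the proposal does not yield the theorem.
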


We present an extension of Theorem \ref{Thm_Lclipat},
which is in the spirit of Kopylov's remark (see the footnote).
\begin{thm} \label{Thm_KWLcycle}
Let $n\geq c\geq 5$ and $s\geq 2$. If  $G$ is a 2-connected graph on $n$ vertices
with circumference less than $c$ and minimum degree $\delta(G)\geq k\geq 2$, then
$N_s(G)\leq \max\{f_s(n,k,c),f_s(n,\lfloor\frac{c-1}{2}\rfloor,c)\}$.
\end{thm}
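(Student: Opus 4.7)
The approach is to adapt Luo's proof of Theorem \ref{Thm_Lclipat} by incorporating the minimum degree hypothesis in the spirit of Woodall's conjecture, as hinted at in Kopylov's remark reproduced in the footnote. Both extensions of Kopylov's original theorem---Luo's $K_s$-counting version and Woodall's minimum-degree version---proceed through essentially the same Kopylov-style disintegration framework, so a unified proof should combine the two kinds of bookkeeping.

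The first step is an auxiliary convexity lemma: for fixed $n$ and $c$, the function $j\mapsto f_s(n,j,c)=\binom{c-j}{s}+\binom{j}{s-1}(n-c+j)$ is convex in the integer variable $j$ over the range $[2,\lfloor(c-1)/2\rfloor]$. A direct computation of the second forward difference should suffice and generalizes the corresponding inequality used by Kopylov for $s=2$. The consequence is that whenever $k\leq j\leq\lfloor(c-1)/2\rfloor$ one has $f_s(n,j,c)\leq\max\{f_s(n,k,c),f_s(n,\lfloor(c-1)/2\rfloor,c)\}$, so it suffices to prove $N_s(G)\leq f_s(n,j,c)$ for some $j$ in this interval.

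The second step is to carry out the Kopylov-style disintegration of $G$: exploiting $2$-connectivity and the bound on circumference, one finds (via an ear decomposition or by analyzing attachments of ``petals'' to a longest cycle) a natural parameter $j$ and a comparison with $H(n,j,c)$. Following Luo, at each stage one counts $K_s$-copies rather than edges. The new ingredient is that the pendant-like vertices in the resulting structure have exactly $j$ neighbors in the core, and $\delta(G)\geq k$ therefore forces $j\geq k$; meanwhile $j\leq\lfloor(c-1)/2\rfloor$ follows from the circumference restriction as in Kopylov's original proof.

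The main obstacle will be the combined bookkeeping: tracking the $K_s$-count while simultaneously maintaining control of the minimum degree throughout the disintegration. When one passes to a smaller $2$-connected subgraph, the minimum degree in that subgraph may drop below $k$, and one must argue that the removed vertices contribute at most $\binom{k}{s-1}$ copies of $K_s$ each (via their $k$ neighbors in $G$), reconstituting the target bound. A secondary technicality is the degenerate regime $k>\lfloor(c-1)/2\rfloor$, where the convexity reduction is vacuous and one must directly verify $N_s(G)\leq f_s(n,k,c)$ using the high minimum degree together with the circumference bound; this case should be handled by an independent short argument, likely invoking Theorem \ref{Thm_cliquepath} applied within a longest cycle to squeeze the clique count.
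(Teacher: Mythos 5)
Your convexity reduction is the right first move and matches the paper: $f_s(n,x,c)$ is convex on $[k,t]$ with $t=\lfloor(c-1)/2\rfloor$, so it suffices to land on a bound $f_s(n,j,c)$ for some $j\in[k,t]$. But the heart of the argument is missing, and the one concrete mechanism you do propose is incorrect. The paper does not pass to smaller 2-connected subgraphs or analyze ears/petals of a longest cycle; it takes an edge-maximal counterexample $G$ (so every nonadjacent pair is joined by a path with at least $c-1$ edges), forms the $t$-disintegration $H=H(G;t)$, shows $H$ is a nonempty clique, and then---this is exactly where $\delta(G)\geq k$ enters---shows $r=|V(H)|\leq c-k$ by choosing a nonadjacent pair $x\notin V(H)$, $y\in V(H)$ whose longest connecting path $P$ is as long as possible, proving $N_G(x)\subseteq V(P)$ and $N_H(y)\subseteq V(P)$, and invoking Lemma \ref{Lem_Kopylov} with $d_P(x)\geq k$ and $d_P(y)\geq r-1$ to force a cycle of length at least $c$. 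Your sketch replaces this with the assertion that the ``pendant-like vertices in the resulting structure have exactly $j$ neighbors in the core, so $\delta(G)\geq k$ forces $j\geq k$,'' which is reasoning about the extremal configuration $H(n,j,c)$ itself rather than about an arbitrary counterexample; nothing in your outline produces such a structure.

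The bookkeeping you propose is also backwards. In a disintegration at threshold $\alpha$, each deleted vertex has degree at most $\alpha$ at deletion time and hence lies in at most $\binom{\alpha}{s-1}$ copies of $K_s$ with not-yet-deleted vertices; the relevant thresholds are $t$ and $c-r$, not $k$. The hypothesis $\delta(G)\geq k$ is a lower bound on degrees, so ``each removed vertex contributes at most $\binom{k}{s-1}$ copies via its $k$ neighbors'' is not a valid estimate, and your concern about the minimum degree dropping in subgraphs signals the wrong induction: the degree condition is used only once, globally in $G$, inside Kopylov's lemma. You are also missing the final step of the scheme (if $H(G;t)\neq H(G;c-r)$ one extracts a cycle of length at least $c$, as in Kopylov's original proof), without which the second disintegration bound never gets applied. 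Finally, the regime $k>\lfloor(c-1)/2\rfloor$ needs no separate treatment: a 2-connected graph with $\delta(G)\geq k\geq c/2$ and $n\geq c$ has circumference at least $\min\{2k,n\}\geq c$ by Dirac's theorem, so that case is vacuous.
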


To prove Theorem \ref{Thm_KWLcycle}, we need the following lemma,  whose proof is omitted in \cite{K77}.
We would like to mention that this
generalizes Bondy's lemma on longest cycles,
whose proof is implicit in the proof of Lemma
1 in \cite{B71}.
\begin{lem}[Kopylov \cite{K77}]\label{Lem_Kopylov}
Let $G$ be a 2-connected $n$-vertex graph with a path $P$ of $m$
edges with endpoints $x$ and $y$. For $v\in V(G)$, let $d_P(v)=|N(v)\cap V (P)|$. Then $G$ contains
a cycle of length at least $\min\{m+1,d_P(x)+d_P(y)\}$.
\end{lem}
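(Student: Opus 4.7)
Write $P=v_0 v_1\cdots v_m$ with $v_0=x$ and $v_m=y$, and set
\[
A=\{i\in\{1,\ldots,m\}:v_0v_i\in E(G)\},\qquad
B+1=\{j+1: 0\le j\le m-1,\ v_jv_m\in E(G)\},
\]
both subsets of $\{1,\ldots,m\}$ of sizes $d_P(x)$ and $d_P(y)$ respectively. The plan is to split on whether $A$ and $B+1$ intersect.

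\textbf{Easy case.} If $A\cap(B+1)\ne\emptyset$, pick $i$ in the intersection; then $v_0v_i\in E$ and $v_{i-1}v_m\in E$, so
\[
v_0 v_1\cdots v_{i-1}\, v_m\, v_{m-1}\cdots v_i\, v_0
\]
is a cycle of length exactly $m+1$, which already proves the lemma. Because both sets live in the $m$-element set $\{1,\ldots,m\}$, pigeonhole automatically puts us in this case whenever $d_P(x)+d_P(y)\ge m+1$.

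\textbf{Main case.} Now $A\cap(B+1)=\emptyset$, so $d_P(x)+d_P(y)\le m$; we must produce a cycle of length at least $d_P(x)+d_P(y)$. For each $i\in A$ and $j\in B$ with $j<i$, the path $P$ yields the cycle
\[
C_{i,j}:=v_0 v_1\cdots v_j\, v_m\, v_{m-1}\cdots v_i\, v_0
\]
of length $m+2-(i-j)$, and disjointness forces $i-j\ge 2$. A short pigeonhole on how the labeled positions of $A$ and $B+1$ distribute themselves in $\{1,\ldots,m\}$ (with the $m-d_P(x)-d_P(y)$ unlabeled positions serving as slack) produces a pair with $i-j\le m-d_P(x)-d_P(y)+2$, giving $|C_{i,j}|\ge d_P(x)+d_P(y)$. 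This works as long as $\max A>\min B$, i.e.\ the on-path neighborhoods of $x$ and $y$ interleave.

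\textbf{Separated subcase and main obstacle.} The remaining situation $\max A\le\min B$ is where $2$-connectedness enters essentially. Setting $r=\max A$ and $s=\min B$, the path $P$ splits into a left block $v_0,\ldots,v_r$ containing all on-path neighbors of $x$ and a right block $v_s,\ldots,v_m$ containing all on-path neighbors of $y$, possibly with a gap between them. Since $G$ is $2$-connected, no single vertex of the gap (nor the common vertex $v_r=v_s$ if $r=s$) can separate $x$ from $y$, which forces an additional chord or off-path ear of $G$ bridging the left and right blocks. Splicing such a bridge with the local cycles $v_0 v_1\cdots v_r v_0$ (length $\ge d_P(x)+1$) and $v_s v_{s+1}\cdots v_m v_s$ (length $\ge d_P(y)+1$) yields a single cycle of length at least $d_P(x)+d_P(y)$. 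This separated subcase is the main obstacle: neither the cycles $C_{i,j}$ nor the two local end-cycles are individually long enough, and one must deploy $2$-connectedness carefully to build the ear and stitch the pieces together. It is precisely the step that Kopylov chose to omit in \cite{K77}, and the point where our argument departs from Bondy's Lemma~1 in \cite{B71}, where $P$ is taken to be a longest path and the separated configuration therefore does not arise.
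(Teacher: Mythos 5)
The paper itself gives no proof of this lemma (it is quoted from Kopylov, and the authors note the proof is omitted even in \cite{K77}), so your attempt can only be judged on its own merits. Your easy case and your interleaving case are fine: when $A\cap(B+1)=\emptyset$ and $\max A>\min B$, the sorted union of $A$ and $B+1$ must contain consecutive labelled positions $p\in B+1<q\in A$ with only unlabelled positions between them, so $q-p\le m-d_P(x)-d_P(y)+1$ and the cycle $C_{q,p-1}$ has length at least $d_P(x)+d_P(y)$; this makes your ``short pigeonhole'' precise.

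The separated subcase, however, is a genuine gap rather than a routine omission, and the two assertions you lean on are both false as stated. First, $2$-connectedness does \emph{not} force a chord or ear joining the left block $\{v_0,\dots,v_r\}$ directly to the right block $\{v_s,\dots,v_m\}$: it only forbids cut vertices, and the forced ears may land in the gap $v_{r+1},\dots,v_{s-1}$ (a chain of overlapping ears along the gap already makes the graph $2$-connected with no left--right bridge), so one needs a Menger/vine-type argument over the whole path, not a single bridge. Second, even when a bridge exists, ``splicing it with the local cycles'' does not give length $d_P(x)+d_P(y)$: a cycle through both blocks uses only one route in each block between its two attachment points, and an arc of $v_0v_1\cdots v_rv_0$ between two attachment points may contain only about half of the block (already in the simplest example, two fans glued at $v_r$ plus one edge $v_1v_{m-1}$, the arc-splice gives $d_P(x)+d_P(y)-1$, and one must reroute through the chords at $v_0$ and $v_m$ to do better). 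Worse, if the attachment points on the left are, say, $v_0$ and $v_{\lfloor r/2\rfloor}$, then every path between them in the left structure (path plus chords from $v_0$) has only about $r/2+2$ vertices, so no routing recovers all $d_P(x)$ neighbours of $x$; the correct argument must exploit that one attachment can be taken to be $v_r$ (via the gap segment of $P$) and must handle ears that stop inside the gap, which is exactly the content you have deferred. As it stands, the hardest case of the lemma is asserted, not proved.
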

We also need a definition from
Kopylov \cite{K77}.
\begin{definition}
[$\alpha$-disintegration of a graph, Kopylov \cite{K77}]
Let $G$ be a graph and $\alpha$ be a natural number. Delete all
vertices of degree at most $\alpha$ from $G$; for the resulting graph $G'$,
we again delete all vertices of degree at most  $ \alpha$ from $G'$. We keep running this process  until we finally get a graph, denoted by $H(G;\alpha)$, such
that all vertices are of degree  larger than $\alpha$.
\end{definition}

Our proof is  very similar to Kopylov's proof \cite{K77}
of Theorem \ref{Thm_Kopy} and  the proof of
Theorem \ref{Thm_Lclipat} in \cite{L17}.
We only give the sketch and omit the details.
We split the proof into five steps.

\smallskip
\noindent
{\bf A sketch of the proof of Theorem \ref{Thm_KWLcycle}.}
Let $G$ be a counterexample such that $G$ is edge maximal, i.e., adding
each nonedge creates a cycle of length at least $c$. Thus each pair of
nonadjacent vertices  is  connected by a path of length
at least $c-1$. Let $t=\lfloor \tfrac{c-1}{2} \rfloor$ and $H=H(G;t)$.
\begin{claim}[\cite{L17}]
$H$ is not empty.
\end{claim}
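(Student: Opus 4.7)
The plan is to argue by contradiction. Assume $H=H(G;t)$ is empty, so the $t$-disintegration removes every vertex of $V(G)$; equivalently there is an ordering $v_1,\ldots,v_n$ of $V(G)$ in which each $v_i$ has at most $t$ neighbors among $\{v_{i+1},\ldots,v_n\}$, i.e.\ $G$ is $t$-degenerate.

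First I would extract degree information from a longest path. Let $P=x_0x_1\cdots x_m$ be a longest path in $G$; since $P$ is longest, $N_G(x_0),N_G(x_m)\subseteq V(P)$. If $x_0x_m$ were an edge, then $P\cup\{x_0x_m\}$ would form a cycle which, by the 2-connectedness of $G$ combined with the longest-path property, must be Hamiltonian (any off-cycle vertex would, via a 2-connectedness argument, yield a path strictly longer than $P$); the resulting circumference $n\ge c$ then contradicts the hypothesis. So $x_0$ and $x_m$ are nonadjacent, and the edge-maximality of $G$ yields an $x_0$-$x_m$ path of length at least $c-1$, forcing $m\ge c-1$. Applying Lemma~\ref{Lem_Kopylov} to $P$, and using that the circumference is strictly less than $c$ while $m+1\ge c$, we obtain
\[
d_G(x_0)+d_G(x_m)=d_P(x_0)+d_P(x_m)\le c-1\le 2t+1,
\]
so in particular at least one of $x_0,x_m$ has degree at most $t$ in $G$.

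The main obstacle is to convert this local degree information, together with the degeneracy ordering, into an actual contradiction. The intended route, parallel to Kopylov's original argument \cite{K77} and Luo's treatment in \cite{L17}, is to combine a P\'osa-style rotation along $P$ (for every index $j$ with $x_0x_{j+1}\in E(G)$, the path $x_jx_{j-1}\cdots x_0x_{j+1}\cdots x_m$ is again a longest path, so $x_j$ is a longest-path endpoint, hence $N_G(x_j)\subseteq V(P)$ and $x_j$ satisfies the analogous degree-sum bound with $x_m$) with the degeneracy ordering, to show that the $t$-disintegration cannot peel off the vertices of $P$ fast enough. At some intermediate stage the surviving subgraph should still carry a path long enough for Lemma~\ref{Lem_Kopylov} to produce a cycle of length at least $c$ in $G$, contradicting the circumference hypothesis. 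Making the ``$P$ survives the peeling'' step precise --- while using the hypothesis $\delta(G)\ge k\ge 2$ only to guarantee the 2-connectedness that makes the rotation machinery available --- is the technical crux of the proof, and it is precisely at this step that the $k=2$ argument of \cite{L17} needs only a cosmetic strengthening to handle general $k\ge 2$.
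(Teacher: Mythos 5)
There is a genuine gap. Your argument never arrives at a contradiction: after extracting the degree bound $d_P(x_0)+d_P(x_m)\le c-1$ you explicitly defer the decisive step (``$P$ survives the peeling'') as the ``technical crux,'' so what you have is a plan, not a proof. Worse, the contradiction you are aiming for --- producing a cycle of length at least $c$ from the assumption that $H(G;t)$ is empty --- cannot be reached by structural means alone. For odd $c$, take the extremal graph $H(n,t,c)$ itself (a $K_{t+1}$ together with $n-t-1$ further vertices all joined to the same $t$ vertices of the clique, $t=\lfloor\frac{c-1}{2}\rfloor$): it is 2-connected, has minimum degree $t\ge k$, has circumference $c-1<c$, is even saturated in the sense that adding any missing edge creates a cycle of length at least $c$, and its $t$-disintegration is empty (the outside vertices have degree $t$ and are deleted first, after which every vertex of $K_{t+1}$ has degree $t$). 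So ``$H$ empty'' is perfectly consistent with every structural hypothesis in force, and no rotation/degeneracy argument can turn it into a long cycle; any correct proof must use the assumption that $G$ is a counterexample to the \emph{clique-count} bound, which your outline never invokes (indeed $s$-cliques never appear in it).

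The paper's proof is exactly that short counting argument. If the $t$-disintegration empties $G$, order the vertices by deletion time. Each of the first $n-t$ deleted vertices has, at the moment of deletion, at most $t$ neighbours among the surviving vertices, hence lies in at most $\binom{t}{s-1}$ copies of $K_s$ whose other vertices all survive it; the last $t$ vertices span at most $\binom{t}{s}$ copies of $K_s$. Since every copy of $K_s$ is counted at its earliest-deleted vertex,
\[
N_s(G)\le (n-t)\binom{t}{s-1}+\binom{t}{s}\le f_s(n,t,c),
\]
contradicting $N_s(G)>\max\{f_s(n,k,c),f_s(n,t,c)\}$ for a counterexample $G$. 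Your longest-path/rotation machinery belongs to the later claims of the proof (where nonemptiness and structure of $H$ are exploited), not to this one.
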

\begin{proof}
Suppose not. For the first $n-t$ vertices in the process of getting
 $H(G;t)$, each of them has degree at most $t$ and then it is
contained in at most  $(n-t) \binom{t}{s-1}$ copies of $K_s$.
The number of copies of $K_s$ in the subgraph induced by the last $t$ vertices
is bounded from above by $\binom{t}{s}$. Thus we have the following upper bound on $N_s(G)$:
\[
N_s(G) \leq  (n-t) \binom{t}{s-1}+\binom{t}{s} \leq f_s(n,t,c),
\]
which is a contradiction.
\end{proof}

\begin{claim}[\cite{K77}]
$H$ is a clique.
\end{claim}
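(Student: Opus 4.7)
The plan is to mimic Kopylov's classical argument in \cite{K77}: suppose for contradiction that $H$ is not a clique, fix a non-adjacent pair $u,v \in V(H)$ together with a connecting path chosen to be as long as possible, and derive a contradiction via Lemma~\ref{Lem_Kopylov}. Since $G$ is edge-maximal as a counterexample, adding any non-edge creates a cycle of length at least $c$, so every non-adjacent pair in $G$ is joined by a path of length at least $c-1$. Among all non-adjacent pairs $\{u,v\}\subseteq V(H)$ and all $u$-$v$ paths $P$ in $G$, I would pick $(u,v,P)$ maximizing $m:=|E(P)|$; in particular $m\geq c-1$.

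Applying Lemma~\ref{Lem_Kopylov} to $P$ then produces a cycle in $G$ of length at least $\min\{m+1,\, d_P(u)+d_P(v)\}$. Since the circumference is less than $c$ and $m+1\geq c$, this forces $d_P(u)+d_P(v)\leq c-1$. The key step is to show that every neighbor of $u$ in $H$ lies on $P$. Suppose $w$ is a neighbor of $u$ in $H$ with $w\notin V(P)$. If $wv\in E(G)$, then $wuPvw$ is a cycle of length $m+2\geq c+1$, contradicting the circumference bound. Otherwise $\{w,v\}$ is a non-adjacent pair in $V(H)$ joined by the path $wuPv$ of length $m+1$, contradicting the maximality of $m$. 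Hence $d_P(u)\geq t+1$, because the definition of $\alpha$-disintegration guarantees that every vertex of $H$ has $H$-degree strictly greater than $t$. By symmetry $d_P(v)\geq t+1$ as well.

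Combining, $d_P(u)+d_P(v)\geq 2(t+1)$, and an elementary check shows $2(t+1)\geq c$: if $c$ is odd then $t=(c-1)/2$ and $2(t+1)=c+1$, while if $c$ is even then $t=(c-2)/2$ and $2(t+1)=c$. This contradicts $d_P(u)+d_P(v)\leq c-1$, so $H$ must be a clique. The main obstacle I anticipate is precisely the step bounding $d_P(u)$: it is critical that the maximization in the choice of $(u,v,P)$ be taken over all non-adjacent pairs in $V(H)$ simultaneously (not just over paths between one fixed pair), so that an off-path neighbor $w\in V(H)$ of $u$ immediately produces either a too-long cycle or a longer admissible path.
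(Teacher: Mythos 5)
Your proof is correct and is essentially the classical Kopylov argument that the paper itself omits by citing \cite{K77}: choosing a longest path over \emph{all} non-adjacent pairs in $V(H)$, forcing all $H$-neighbours of the endpoints onto the path, and then using Lemma~\ref{Lem_Kopylov} together with $d_P(u)+d_P(v)\geq 2(t+1)\geq c$ to contradict the circumference bound. No gaps; the key subtleties (maximizing over all pairs simultaneously, and $H$ being induced so non-adjacency in $H$ equals non-adjacency in $G$) are handled correctly.
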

The main differences come from  Claims 3 and 4, whose proofs need the minimum
degree condition and a new function.
\begin{claim}\label{claim-inequality}
Let $r=|V(H)|$. Then $k\leq c-r \leq t$.
\end{claim}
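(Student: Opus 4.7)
My plan is to prove the two inequalities separately. The upper bound $c - r \leq t$ follows directly from the disintegration procedure together with Claim 2 and uses no information about $k$. The lower bound $k \leq c - r$, which is the content of Claim \ref{claim-inequality} beyond Luo's Theorem~\ref{Thm_Lclipat}, is where the minimum-degree hypothesis $\delta(G) \geq k$ plays its essential role.

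For the upper bound, recall that $H = H(G; t)$ is obtained by iteratively removing vertices of degree at most $t$, so every vertex of $H$ has degree strictly greater than $t$ in $H$. By Claim 2, $H$ is a clique on $r$ vertices, so each such $H$-degree equals $r - 1$. Hence $r \geq t + 2$. Since $t = \lfloor (c-1)/2 \rfloor$ satisfies $c \leq 2t + 2$ for every integer $c\geq 3$, we obtain $c - r \leq (2t + 2) - (t + 2) = t$.

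For the lower bound, I argue by contradiction. Suppose $r \geq c - k + 1$; the goal is to produce a cycle of length at least $c$ in $G$. If $H = G$, then $G$ is a clique on $n \geq c$ vertices, already yielding such a cycle, so assume $V(G) \setminus V(H) \neq \emptyset$ and pick $u \in V(G) \setminus V(H)$. Since $G$ is 2-connected and $|V(H)| \geq 2$, the fan lemma provides two internally disjoint paths from $u$ to $V(H)$ with interiors in $V(G) \setminus V(H)$, ending at distinct vertices $v_1, v_2 \in V(H)$. Concatenating them yields an ``$H$-ear'' $P$ from $v_1$ to $v_2$ with $\ell := |E(P)|$ edges. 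Because $H$ is a clique, there is a Hamilton path of $H$ from $v_1$ to $v_2$ of length $r - 1$; gluing the two produces a cycle of length $(r - 1) + \ell$ in $G$. Since $r \geq c - k + 1$ forces $c - r + 1 \leq k$, an $H$-ear with $\ell \geq k$ gives a cycle of length at least $c$, the desired contradiction.

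To produce an $H$-ear of length at least $k$, take $P$ to be a maximum $H$-ear and suppose for contradiction that $\ell \leq k - 1$; then $P = v_1 u_1 u_2 \cdots u_m v_2$ has $m \leq k - 2$ internal vertices, each of $G$-degree at least $k$ by hypothesis. Analyzing how the neighbors of $u_1$ (and $u_m$) split among $V(H)$, $V(P)$, and $V(G) \setminus (V(H) \cup V(P))$, and using the maximality of $P$, Kopylov's Lemma~\ref{Lem_Kopylov} applied to paths augmented through these neighbors, and the edge-maximality of the counterexample $G$ (so that any two non-adjacent vertices are joined by a path of length at least $c - 1$), one produces in every case either a strictly longer $H$-ear (contradicting the maximality of $P$) or directly a cycle of length at least $c$ (contradicting the circumference bound). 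Hence $\ell \geq k$, as required.

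The main obstacle is this last step — producing the $H$-ear of length at least $k$. It is precisely where $\delta(G) \geq k$ is essential: for $k = 2$ the bound $\ell \geq 2$ is automatic from 2-connectivity, which is why Luo's proof of Theorem~\ref{Thm_Lclipat} could avoid this point. Following the modification suggested in the remark of Kopylov quoted in the footnote after Woodall's conjecture, the case analysis is the novel technical part of our argument, carefully combining $\delta(G) \geq k$, the edge-maximality of $G$, and Kopylov's Lemma.
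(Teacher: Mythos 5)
Your upper bound $c-r\le t$ and the overall contradiction framework (assume $r\ge c-k+1$, produce a cycle of length at least $c$) match the paper, but the heart of the lower bound is missing. Everything hinges on your assertion that a maximum $H$-ear has at least $k$ edges, and this is never proved: the ``case analysis'' of the neighbors of $u_1$ and $u_m$ is only described, not carried out, and it is exactly the step where the argument could fail. Indeed, maximality of the ear together with $\delta(G)\ge k$ does not obviously force $\ell\ge k$: if a vertex $u\notin V(H)$ has all of its (at least $k$) neighbors inside the clique $H$, then every ear through $u$ has length exactly $2$, and nothing in your sketch rules out that all outside vertices are of this type, in which case every $H$-ear has length $2<k$. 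Excluding such configurations requires an argument that genuinely combines edge-maximality with long paths between nonadjacent pairs --- which is precisely the content you have deferred. As written, the proposal proves only $c-r\le t$ and reduces $k\le c-r$ to an unverified claim, so there is a genuine gap.

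For comparison, the paper's proof avoids ears entirely. Assuming $r\ge c-k+1$, it chooses a nonadjacent pair $x\in V(G)\setminus V(H)$, $y\in V(H)$ (such a pair exists since any vertex adjacent to all of $H$ would lie in $H$) so that a longest $x$--$y$ path $P$ is as long as possible over all such pairs; edge-maximality gives $|V(P)|\ge c$. The extremal choice of the pair, together with the circumference bound, forces $N_G(x)\subseteq V(P)$ and $N_H(y)\subseteq V(P)$ (a neighbor $z$ of $x$ off $P$ would either extend to a longer path from $z$ to $y$, or, if $zy\in E(G)$, close a cycle of length at least $c+1$). Then Lemma \ref{Lem_Kopylov} applies with $d_P(x)\ge \delta(G)\ge k$ and $d_P(y)\ge r-1\ge c-k$, giving a cycle of length at least $\min\{c,\,k+(c-k)\}=c$, a contradiction. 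If you want to rescue your ear-based route, you would need to supply the full case analysis you allude to; the paper's choice of the pair $(x,y)$ maximizing the longest connecting path is the device that makes the degree count go through cleanly, and you would likely end up reproducing something equivalent to it.
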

\begin{proof}
As $H=H(G;t)$ is a clique, $r \geq t+2$. We first claim $r\leq c-k$,
where $\delta(G)\geq k$.  Suppose  $r\geq c-k+1$.
If $x\in V(G)\setminus V(H)$,
then $x$ is not adjacent to at least one vertex in $H$. Otherwise,
$x\in H$. We pick $x\in V(G)\setminus V(H)$ and $y\in V(H)$
satisfying the following two conditions: (a) $x$ and $y$ are
not adjacent; and  (b) a  longest path in $G$ from $x$ to $y$ contains
the largest number of edges among such nonadjacent pairs. Let
$P$ be a longest path in $G$ from $x$ to $y$. Clearly,
$|V(P)|\geq c$ as $G$ is edge maximal. We next show $N_G(x)\subseteq V(P)$.
Suppose not.  Let  $z \in N_G(x)$ and $z \notin V(P)$. If $z$ and $y$ are not
adjacent, then there is a longer path from $z$ to $y$,
a contradiction to the selection of $x$ and $y$.  If $z$ and $y$ are
adjacent, then there is a cycle of length at least $c+1$,
a contradiction to the assumption of $G$. Similarly,
we can show $N_H(y)\subseteq V(P)$. Therefore, by Lemma \ref{Lem_Kopylov},
there is a cycle with length at least
$\min\{c,d_{P}(x)+d_P(y)\}\geq \min \{c,k+c-k\}=c$,
a contradiction. Thus $r \leq c-k$.
Recall $t+2\leq r\leq c-k$. We get $k\leq c-r\leq c-t-2\leq t$.
This proves Claim \ref{claim-inequality}.
\end{proof}
\begin{claim}\label{claim-convex}
Let $H'=H(G;c-r)$. Then $H \neq H'$.
\end{claim}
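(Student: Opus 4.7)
The plan is to assume for contradiction that $H=H'$ and derive a contradiction from a counting estimate combined with convexity of $f_s(n,\cdot,c)$ in its middle argument. First observe that since $c-r\leq t$ by Claim \ref{claim-inequality}, and the disintegration $H(G;\alpha)$ can only grow (as a subgraph) when $\alpha$ decreases, we already have $H\subseteq H'$ automatically; what the claim really asserts is that this inclusion is strict.

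Under the hypothesis $H=H'$, I would order the vertices $v_1,v_2,\ldots,v_{n-r}$ of $V(G)\setminus V(H')$ according to the sequence in which they are deleted by the $(c-r)$-disintegration process. When $v_i$ is about to be removed, its degree in the current graph is at most $c-r$, so the number of copies of $K_s$ in $G$ that contain $v_i$ but no $v_j$ with $j<i$ is at most $\binom{c-r}{s-1}$. Adding the $\binom{r}{s}$ copies of $K_s$ that lie entirely inside the clique $H=H'$ (using Claim 2), every $K_s$ of $G$ is charged exactly once, giving
$$N_s(G)\leq (n-r)\binom{c-r}{s-1}+\binom{r}{s}=f_s(n,c-r,c).$$

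To finish, I would invoke the convexity of $f_s(n,\cdot,c)$ on the interval $[2,\lfloor(c-1)/2\rfloor]$; this property is explicit in Luo's paper \cite{L17} and implicit in Kopylov's \cite{K77}. Since Claim \ref{claim-inequality} gives $k\leq c-r\leq t$, convexity yields
$$f_s(n,c-r,c)\leq \max\{f_s(n,k,c),f_s(n,t,c)\},$$
which contradicts the assumption that $G$ is a counterexample to Theorem \ref{Thm_KWLcycle}. The one subtle point is the charging scheme in the counting step: one must be sure no copy of $K_s$ is double-counted, which is guaranteed by always charging a $K_s$ to its earliest-removed vertex (or to $H$ if all of its vertices survive). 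The convexity step is routine manipulation of binomial coefficients and is the same calculation used in the proofs of Theorems \ref{Thm_Kopy} and \ref{Thm_Lclipat}, so the only genuinely new ingredient is the lower bound $c-r\geq k$ from the minimum degree hypothesis, which is exactly what lets $f_s(n,k,c)$ replace $f_s(n,2,c)$ in the conclusion.
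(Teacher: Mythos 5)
Your proof is correct and follows essentially the same route as the paper: under $H=H'$ you bound $N_s(G)\leq (n-r)\binom{c-r}{s-1}+\binom{r}{s}=f_s(n,c-r,c)$ and then use convexity of $f_s(n,\cdot,c)$ together with $k\leq c-r\leq t$ from Claim \ref{claim-inequality} to contradict $G$ being a counterexample. Your only addition is to make explicit the charging scheme (each $K_s$ charged to its earliest-deleted vertex, or to the clique $H$), which the paper leaves implicit.
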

\begin{proof}
If $H=H'$, then  we have
\[
N_s(G) \leq (n-r) \binom{c-r}{s-1}+\binom{r}{s}=f_s(n,c-r,c) \leq \max \{f_s(n,k,c), f_s(n,t,c)\},
\]
as  the function $f_s(n,x,c)$ is convex for $x\in [k,t]$ and
$k\leq c-r\leq t$. This is a contradiction. This proves Claim
\ref{claim-convex}.
\end{proof}

\begin{claim}
$G$ contains a cycle of length at least $c$.
\end{claim}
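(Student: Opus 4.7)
The plan is to contradict the standing circumference assumption by exhibiting a cycle of length at least $c$ in $G$.

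By Claim \ref{claim-convex}, I can pick a vertex $v\in V(H')\setminus V(H)$. Since $H'=H(G;c-r)$ is obtained by iteratively discarding vertices of degree at most $c-r$, the vertex $v$ satisfies $d_{H'}(v)\geq c-r+1$. Moreover, $v$ must have a non-neighbour $u$ in $V(H)$: during the disintegration toward $H=H(G;t)$, the clique $V(H)$ survives intact, so at the step when $v$ was deleted $v$ had degree at most $t<r$ in the current graph, and therefore cannot be adjacent in $G$ to all $r$ vertices of $V(H)$. By the edge-maximality of the counterexample $G$, adding the non-edge $uv$ forces a cycle of length at least $c$, so $G$ itself contains a $v$--$u$ path of length at least $c-1$.

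Let $P$ be a longest path in $G$ having $v$ as one endpoint and some vertex of $V(H)$ as the other endpoint; denote the $V(H)$-endpoint by $u'$. Then $|E(P)|\geq c-1$. Since $H$ is a clique, $V(H)\subseteq V(P)$: any $w\in V(H)\setminus V(P)$ would be adjacent to $u'$ inside $H$, so the path $v\,P\,u'\,w$ would contradict the maximality of $P$. A rotation-extension argument through the clique $H$, in the style of Kopylov \cite{K77}, then propagates this maximality to the $v$-end and yields $N_{H'}(v)\subseteq V(P)$ as well. Combining these two inclusions, $d_P(u')\geq r-1$ from the clique structure of $H$, and $d_P(v)\geq d_{H'}(v)\geq c-r+1$ from the disintegration property, so
\[
d_P(u')+d_P(v)\;\geq\;(r-1)+(c-r+1)\;=\;c.
\]
Lemma \ref{Lem_Kopylov} applied to $P$ then delivers a cycle in $G$ of length at least $\min\{|E(P)|+1,\,c\}=c$, contradicting the assumption that the circumference is less than $c$.

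The main obstacle is the rotation-extension step $N_{H'}(v)\subseteq V(P)$: the inclusion $V(H)\subseteq V(P)$ is essentially free from the clique structure, but propagating the maximality of $P$ to the $v$-endpoint requires carefully combining rerouting inside $H$ with the 2-connectivity and edge-maximality of $G$, in the same spirit as Kopylov's original handling of this step and Luo's adaptation in \cite{L17}. The minimum degree hypothesis $\delta(G)\geq k$ enters only indirectly, through the inequality $k\leq c-r$ of Claim \ref{claim-inequality} which keeps $d_{H'}(v)\geq c-r+1$ in the range needed for the final arithmetic to land exactly at $c$.
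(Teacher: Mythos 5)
Your overall strategy is the right one, and is essentially the argument the paper defers to Kopylov for: take $v\in V(H')\setminus V(H)$ with $d_{H'}(v)\geq c-r+1$, a long path ending in the clique $H$, and apply Lemma \ref{Lem_Kopylov} to get a cycle of length at least $\min\{|E(P)|+1,(c-r+1)+(r-1)\}=c$. But there is a genuine gap exactly at the step you yourself flag as ``the main obstacle'': with your choice of $P$ (the vertex $v$ is fixed, and you maximize only over paths from $v$ to $V(H)$), the inclusion $N_{H'}(v)\subseteq V(P)$ does not follow from any extension argument. If $z\in N_{H'}(v)\setminus V(P)$, the extended path $z\,v\,P\,u'$ is longer, but its endpoint is $z$ rather than $v$, so it is not a competitor in your maximization and yields no contradiction; rotation through the clique $H$ does not help, because $H$ sits at the other end of $P$. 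Since this inclusion is precisely what gives $d_P(v)\geq c-r+1$, the proof as written is incomplete at its decisive point.

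The repair is the extremal choice already used in the paper's proof of Claim \ref{claim-inequality}: choose the pair jointly. Among all nonadjacent pairs $x\in V(H')\setminus V(H)$, $y\in V(H)$ (such pairs exist by Claim \ref{claim-convex} together with your correct observation that a vertex deleted in the $t$-disintegration cannot be adjacent to all of $V(H)$), pick one for which a longest $x$--$y$ path $P$ has the most edges; edge-maximality gives $|E(P)|\geq c-1$. Then both inclusions follow. If $z\in N_{H'}(x)\setminus V(P)$: when $z\in V(H)$ or $z$ is adjacent to $y$, the cycle $z\,x\,P\,y\,z$ has length $|E(P)|+2\geq c+1$ and the claim is already proved; otherwise $z\in V(H')\setminus V(H)$ is nonadjacent to $y$ and the pair $(z,y)$ with the path $z\,x\,P\,y$ contradicts the choice of $(x,y)$. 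The argument for $N_H(y)\subseteq V(P)$ is symmetric. With both inclusions, $d_P(x)+d_P(y)\geq (c-r+1)+(r-1)=c$ and Lemma \ref{Lem_Kopylov} finishes exactly as you intended. (Your closing remark about $\delta(G)\geq k$ is also slightly off: $d_{H'}(v)\geq c-r+1$ is automatic from the definition of $(c-r)$-disintegration; the minimum degree condition enters only through Claim \ref{claim-inequality}, which places $c-r$ in $[k,t]$ for the convexity step of Claim \ref{claim-convex}.)
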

The proof of the claim above is the same as Kopylov's proof and we skip it.
The proof of Theorem \ref{Thm_KWLcycle} is complete. {\hfill$\Box$}

Similar to Theorem \ref{Thm_KWLcycle}, we have the following result and
skip the details of the proof.
\begin{thm}
If  $G$ is  an $n$-vertex connected graph containing no $P_l$ and having   minimum degree $\delta(G)\geq k$, where $n\geq l\geq4$, then
$N_s(G)\leq \max\{f_s(n,k,l-1), f_s(n,\lfloor\frac{l}{2}\rfloor-1,l-1)\}$.
\end{thm}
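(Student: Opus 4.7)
The plan is to follow the proof of Theorem~\ref{Thm_KWLcycle} essentially verbatim, with cycles replaced by paths throughout. The parameter substitutions are: the circumference bound $c$ becomes the forbidden path-order $l-1$, and the disintegration threshold $t=\lfloor(c-1)/2\rfloor$ becomes $t=\lfloor l/2\rfloor-1$ (note $\lfloor(l-2)/2\rfloor=\lfloor l/2\rfloor-1$, so these are consistent under $c\leftrightarrow l-1$). I would take an edge-maximal counterexample $G$, so that for every non-adjacent pair $x,y$, any $P_l$ in $G+xy$ must use the new edge $xy$; deleting $xy$ from such a copy exhibits two vertex-disjoint paths in $G$, one starting at $x$ and one starting at $y$, spanning exactly $l$ vertices in total. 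Set $H=H(G;t)$.

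The proof then proceeds through analogs of the five claims above. Claim~1 ($H$ is non-empty): otherwise the usual disintegration bookkeeping yields $N_s(G)\leq (n-t)\binom{t}{s-1}+\binom{t}{s}\leq f_s(n,t,l-1)$, a contradiction. Claim~2 ($H$ is a clique): Kopylov's argument transfers unchanged, as it uses only connectivity and edge-maximality. Claim~3 (with $r=|V(H)|$, one has $k\leq l-1-r\leq t$): the upper bound is forced by $r\geq t+2$; for the lower bound, the minimum degree hypothesis enters exactly as in the proof of Claim~\ref{claim-inequality}, namely pick non-adjacent $x\in V(G)\setminus V(H)$ and $y\in V(H)$ maximizing the length of a longest $x$--$y$ path $P$, argue $N_G(x),N_H(y)\subseteq V(P)$ by maximality, and invoke the path analog of Lemma~\ref{Lem_Kopylov} with $|N(x)|,|N(y)|\geq k$ to force a $P_l$ in $G$. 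Claim~4 (with $H'=H(G;l-1-r)$, one has $H\neq H'$): convexity of $f_s(n,x,l-1)$ on $x\in[k,t]$ combined with $k\leq l-1-r\leq t$ rules out $H=H'$ exactly as in Claim~\ref{claim-convex}. Claim~5: a vertex removed while forming $H'$ from $G$ but not while forming $H$ yields, by routine path-tracing, a copy of $P_l$ in $G$, the final contradiction.

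The main obstacle is the path analog of Lemma~\ref{Lem_Kopylov}: in a connected graph with a longest path $P$ on $m$ edges and endpoints $x,y$, one has $N(x)\cup N(y)\subseteq V(P)$, and a Pósa-type rotation gives a path on at least $\min\{m+2,\,d_P(x)+d_P(y)+1\}$ vertices. This is the standard ingredient implicit in the Erd\H{o}s--Gallai proof of Theorem~\ref{Thm_EGpath}, and once it is in place every remaining step of the proof ports mechanically from the cycle setting, since the convexity bookkeeping in Claim~4 and the contradictions in Claims~1 and~5 do not reference the forbidden substructure directly. For this reason, and to keep the present note short, I would give only the sketch above and omit the repetitive details, as the paper does for Theorem~\ref{Thm_KWLcycle} itself.
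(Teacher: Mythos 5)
Your overall strategy (adapt the proof of Theorem \ref{Thm_KWLcycle}) is the one the paper gestures at, but the sketch hides a genuine gap: the cycle argument does \emph{not} ``port mechanically'' to paths, and the places where it breaks are exactly the places you wave through. In the cycle setting, edge-maximality gives that every nonadjacent pair $x,y$ is joined by a single path on at least $c$ vertices, and every long cycle is an immediate contradiction; both facts are used repeatedly (in Claim 2 that $H$ is a clique, in the step ``$|V(P)|\geq c$'' of Claim \ref{claim-inequality}, in the subcase where the outside neighbour $z$ of $x$ is adjacent to $y$, and in the final claim). In the path setting you correctly observe that adding $xy$ only produces \emph{two disjoint} paths ending at $x$ and $y$ with $l$ vertices in total, but then you never use this weaker structure: you still pick a pair maximizing the longest $x$--$y$ path, with no lower bound on its length, and when the transplanted argument produces a cycle (e.g.\ $z$ adjacent to $y$) this is no longer a contradiction unless you can guarantee the cycle has at least $l-1$ vertices, which the missing bound $|V(P)|\geq c$ was precisely what provided. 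Your auxiliary lemma is also false as stated (for a spanning longest path, e.g.\ $G=K_3$, there is no path on $\min\{m+2,d_P(x)+d_P(y)+1\}$ vertices; one must allow the case $V(P)=V(G)$), and in Claim 3 you apply it to a longest $x$--$y$ path rather than a longest path of $G$, where neither $N(x)\subseteq V(P)$ nor the conclusion comes for free. Likewise ``Kopylov's argument transfers unchanged'' for Claim 2 is not true: that argument invokes Lemma \ref{Lem_Kopylov} on a long $x$--$y$ path and contradicts the circumference bound, neither of which is available here.

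The standard repair, and the natural reading of the authors' ``similar'' proof, is not a term-by-term transliteration inside $G$ but a change of ambient graph: pass to $G^*=G\vee K_1$, which is $2$-connected, has minimum degree at least $k+1$ and circumference at most $l$ (a cycle of length $\geq l+1$ in $G^*$ yields a $P_l$ in $G$). In $G^*$ every consequence of edge-maximality and every cycle-based contradiction of the proof of Theorem \ref{Thm_KWLcycle} applies verbatim with $c=l+1$ and threshold $\lfloor l/2\rfloor$; the only new work is in the two counting steps (Claims 1 and 4), where one must count only the $s$-cliques of $G$, i.e.\ those avoiding the added dominating vertex $w$: since $w$ is adjacent to every deleted vertex, each deleted vertex lies in at most $\binom{\lfloor l/2\rfloor-1}{s-1}$ such cliques, and the totals match $f_s(n,\lfloor l/2\rfloor-1,l-1)$ and $f_s(n,k,l-1)$ (note $H(n,a,l-1)\vee K_1=H(n+1,a+1,l+1)$). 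Be warned that one cannot simply apply the statement of Theorem \ref{Thm_KWLcycle} to $G^*$, since $N_s(G^*)=N_s(G)+N_{s-1}(G)$ and the error term does not cancel; the counting must be redone inside the proof. Either carry out this $G\vee K_1$ version, or, if you insist on arguing inside $G$, supply genuine path-rotation replacements for Claims 2, 3 and 5 --- as it stands the sketch asserts rather than proves them.
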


\section{Consecutive lengths of cycles}
 For a graph $G$, let $\mu(G)$ be the largest eigenvalue of the adjacency matrix.
Nikiforov \cite{N08} proved the following:  If  $G$ is  a graph of sufficiently
large order $n$ and  the spectral radius $\mu(G)>\sqrt{\lfloor n^2/4\rfloor}$,
then $G$ contains a cycle of length $t$ for every $t\leq n/320$. We slightly improve Nikiforov's result as follows.

\begin{thm} \label{nikigenera}
Let $G$ be a graph of sufficiently large order $n$ with $\mu(G)>\sqrt{\lfloor n^2/4 \rfloor}$. Then $G$ contains a cycle of length $t$ for every $t \leq n/160$.
\end{thm}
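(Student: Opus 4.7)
The plan is to follow the overall strategy of Nikiforov's proof in \cite{N08} but to replace its cycle-finding step by an application of Theorem \ref{Thm_Generkipas} (or equivalently the neighbourhood path bound Theorem \ref{Thm_EGpath}). This substitution is precisely what yields the factor-of-two improvement from $n/320$ to $n/160$: Theorem \ref{Thm_Generkipas} delivers cycles of \emph{every} length up to $\lceil (k+1)N_{k+1}/N_k\rceil+k$ in one shot, rather than one length at a time.

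First, I would use the spectral hypothesis $\mu(G)>\sqrt{\lfloor n^2/4\rfloor}$ to locate a single vertex $v\in V(G)$ whose neighbourhood $G_v=G[N(v)]$ is dense. Combining the standard inequality $\mu(G)^2\le\max_{v\in V(G)}\sum_{u\in N(v)}d(u)$ with a triangle- or book-counting argument of Nikiforov--Khadzhiivanov type, one should extract, for $n$ sufficiently large, a vertex $v$ satisfying
\[
\frac{2\,e(G_v)}{d(v)}\;\ge\;\frac{n}{160}-1,
\]
equivalently, $v$ lies in at least $n\,d(v)/320$ triangles.

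Second, with such a $v$ in hand, I would apply Theorem \ref{Thm_EGpath} to $G_v$ to obtain a path $P=w_1w_2\cdots w_{\ell+1}$ in $G_v$ with $\ell\ge n/160-1$ edges. Since every $w_i$ is adjacent to $v$ in $G$, for each $2\le t\le\ell+1$ the closed walk $v\,w_1w_2\cdots w_tv$ is a cycle of length $t+1$ in $G$. Ranging $t$ over $2,3,\ldots,\ell+1$ produces cycles of every length $3,4,\ldots,\ell+2$, which covers the required range $3\le t\le n/160$. Alternatively, when $G_v$ contains a triangle, applying Theorem \ref{Thm_Generkipas} with $k=2$ directly to $G_v$ yields the same conclusion via the bound $\lceil 3N_3(G_v)/N_2(G_v)\rceil+2$.

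The main obstacle is sharpness in the first step. Nikiforov's original argument extracts only $e(G_v)/d(v)\ge n/640$, which accounts for his weaker $n/320$ bound. To halve the denominator, the plan is to exploit that $\mu(G)=\sqrt{\lfloor n^2/4\rfloor}$ is attained uniquely by $K_{\lceil n/2\rceil,\lfloor n/2\rfloor}$; strict inequality forces an unavoidable nonbipartite excess, and a Perron-eigenvector stability argument (writing $\mu(G)=2\sum_{uv\in E(G)}x_ux_v$ and comparing against the extremal balanced bipartite eigenvector) should concentrate this excess around a single vertex $v$, doubling the density bound inside $G_v$ and yielding the claimed range $t\le n/160$.
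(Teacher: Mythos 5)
Your second step is fine (it is exactly the mechanism behind Theorem \ref{Thm_Generkipas} and Fact 1), but the first step is a genuine gap, and in the form you state it, it is false. The claim that the hypothesis $\mu(G)>\sqrt{\lfloor n^2/4\rfloor}$ forces a single vertex $v$ with $2e(G_v)/d(v)\geq n/160-1$ fails for near-bipartite examples: take $K_{\lceil n/2\rceil,\lfloor n/2\rfloor}$ and add one triangle (or even one edge) inside a part. By Perron--Frobenius the spectral radius strictly exceeds $\sqrt{\lfloor n^2/4\rfloor}$, yet for every vertex $v$ the neighbourhood $G_v$ spans $O(n)$ edges while $d(v)\approx n/2$, so $2e(G_v)/d(v)=O(1)$. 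No Perron-eigenvector stability argument can ``concentrate the nonbipartite excess'' into a dense neighbourhood here, because the excess is a bounded number of edges whose triangles are spread over a whole side of the bipartition. Such graphs do satisfy the conclusion of the theorem, but they must be reached by a different route, and your proposal has no branch that handles them; you have in effect deferred the entire difficulty of the theorem to an unproved (and unprovable) localization claim.

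The paper's proof avoids exactly this issue by keeping Nikiforov's dichotomy from \cite{N08}: for large $n$ there is an induced $H\subset G$ with $|H|>n/2$ such that either (i) $\mu(H)>(1/2+1/80)|H|$, or (ii) $\mu(H)>|H|/2$ and $\delta(H)>2|H|/5$. In case (i) Nikiforov's counting gives the \emph{global} bound $N_3(H)\geq |H|^3/960$; if $e(H)>|H|^2/4$ one is done by Bollob\'as, and otherwise Fact 1 (i.e.\ Theorem \ref{Thm_Generkipas} with $k=2$) yields cycles of every length up to roughly
\[
\frac{3N_3(H)}{N_2(H)}\;\geq\;\frac{3|H|^3/960}{|H|^2/4}\;=\;\frac{|H|}{80}\;>\;\frac{n}{160}.
\]
The crucial point is that Theorem \ref{Thm_cliquepath}/\ref{Thm_Generkipas} already performs the averaging: a large global ratio $3N_3/N_2$ automatically produces \emph{some} vertex whose neighbourhood contains a long path, so no concentration at a prescribed vertex is ever needed. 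Case (ii) --- which contains precisely the near-bipartite configurations above --- is handled as in Nikiforov's original proof and cannot be dispensed with. To repair your argument you would have to reinstate such a dichotomy and treat the large-minimum-degree, near-bipartite case separately.
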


Notice that Theorem \ref{Thm_Generkipas} implies the following fact:
\begin{fact}
A graph $G$ contains all cycles of length $t\in [3,l]$, where
$l=\frac{3N_3(G)}{N_2(G)}+2$.
\end{fact}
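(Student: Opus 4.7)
The plan is to read this Fact as the $k=2$ instance of the ``in particular'' clause of Theorem \ref{Thm_Generkipas}. First I would dispose of the trivial case $N_3(G)=0$, which in particular absorbs the subcase $N_2(G)=0$: here $l=2$, the interval $[3,l]$ contains no integers, and the statement holds vacuously.

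For the main case $N_3(G)>0$, one automatically has $N_2(G)>0$, so Theorem \ref{Thm_Generkipas} applies with $k=2$. Its second assertion produces a cycle of $G$ of every length from $3$ up to $\lceil \frac{3N_3(G)}{N_2(G)} \rceil + 2$. Since this ceiling expression is at least $\frac{3N_3(G)}{N_2(G)}+2 = l$, every integer $t\in[3,l]$ arises as a cycle length in $G$, which is exactly the Fact. If I wanted to make the proof self-contained rather than invoke the ``in particular'' clause of Theorem \ref{Thm_Generkipas} as a black box, I would briefly reprove it in this case: that theorem first extracts a fan $P_m\vee K_1$ in $G$ with $m\geq \frac{3N_3(G)}{N_2(G)}+1$, and then for each $i\in\{2,\ldots,m\}$, the apex together with $i$ consecutive path-vertices closes up into a cycle of length $i+1$, giving every length in $\{3,\ldots,m+1\}\supseteq\{3,\ldots,\lfloor l\rfloor\}$.

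There is no genuine obstacle: the Fact is essentially a cosmetic restatement of the second half of Theorem \ref{Thm_Generkipas} with $k$ specialized to $2$ and the ceiling dropped. The only care required is the elementary inequality $\lceil x\rceil \geq x$ used to pass from cycles of length at most $\lceil \frac{3N_3(G)}{N_2(G)} \rceil + 2$ to cycles of every integer length at most $\frac{3N_3(G)}{N_2(G)} + 2$, together with an explicit check of the edge cases $N_2(G)=0$ and $N_3(G)=0$ so that the ratio $\frac{3N_3(G)}{N_2(G)}$ is well-defined or the assertion is vacuous.
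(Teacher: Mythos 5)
Your proof is correct and follows exactly the paper's route: the paper gives no separate argument, simply noting that the Fact is Theorem \ref{Thm_Generkipas} with $k=2$, where the ceiling $\lceil \tfrac{3N_3(G)}{N_2(G)} \rceil + 2 \geq l$ yields all cycle lengths in $[3,l]$. Your additional care with the degenerate cases $N_3(G)=0$ and $N_2(G)=0$ is a harmless refinement of the same approach.
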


{\bf A sketch of the proof of Theorem \ref{nikigenera}.}
 Compared with
the original proof in \cite{N08}, the improvement comes
from the fact mentioned above. In \cite{N08}, it is shown that
for $n$ sufficiently large, there exists an induced subgraph
$H\subset G$ with $|H|>n/2$ satisfying one of the following conditions:
\begin{description}
\item[(i)] $\mu(H)>(1/2+1/80)|H|$;
\item[(ii)] $\mu(H)>|H|/2$ and $\delta(H)>2|H|/5$.
\end{description}
For case (i), it is shown in \cite{N08} that $N_3(H)\geq \frac{1}{960}|H|^3$.  In this case,  if
$e(H)=N_2(H)>\frac{|H|^2}{4}$, then a theorem of
Bollob\'as \cite{B} implies there are cycles of lengths
from $3$ to $\frac{|H|}{2}$ in $H$. Thus there are cycles
of length $t$ for each $3 \leq t \leq \tfrac{n}{4}$.
We  assume  $e(H)\leq \frac{|H|^2}{4}$. By Fact 1, $H$ contains all cycles of length $l\in [3,\frac{3|H|^3/960}{|H|^2/4}]$.
Since $\frac{3|H|^3/960}{|H|^2/4}\geq \frac{1}{160}n$,
we proved the result for the case (i). The proof  for case
(ii) follows from Nikiforov's  the original proof (see PP. 1497 in \cite{N08}).

\end{document}